\newtheorem{thm}{Theorem}[section]
\newtheorem{lem}[thm]{Lemma}
\newtheorem{prop}[thm]{Proposition}
\theoremstyle{definition}
\theoremstyle{remark}
\newtheorem{rem}[thm]{Remark}
\numberwithin{equation}{section}
\newcommand{\eps}{\varepsilon}
\newcommand{\Q}{\mathcal{Q}}
\newcommand{\E}{\mathcal{E}}
\newcommand{\D}{\mathcal{D}}
\newcommand{\ed}{\end {document}}
\newcommand{\p}{\partial}
\newcommand{\pa}{\partial_\alpha}
\newcommand{\pab}{\partial^\beta_\alpha}
\newcommand{\ip}{\mathbf{I-P}}
\newcommand{\ipf}{(\mathbf{I-P})f_\eps }
\newcommand{\pf}{\mathbf{P}f_\eps }
\newcommand{\vgrad}{v\!\cdot\!\nabla\!_x}
\newcommand{\fe}{f_\eps }
\newcommand{\wl}{w^l}
\newcommand{\wlbr}{w^{\{l-|\beta|\}|\gamma|}}
\newcommand{\wT}{w^{2\{l-|\beta|\}|\gamma|}}
\newcommand{\var}{\varepsilon}
\newcommand{\TT}{\mathbb{T}^3}
\newcommand{\aN}{|\alpha|\leq N}
\newcommand{\aaN}{|\alpha|\leq N+1}
\newcommand{\abN}{|\alpha|+|\beta|\leq N}
\newcommand{\grad}{\nabla_{\! x}}
\newcommand{\DIV}{\nabla_{\! x}\!\cdot\!}
\newcommand{\dd}{\mathrm{d}}
\newcommand{\LL}{\mathcal{L}}
\begin{document}

\title[Acoustic Limit]
{Acoustic limit of the Boltzmann equation:
\\ classical solutions}
\author[J. Jang]{Juhi Jang}
\address{Courant Institute of Mathematical Sciences, 251 Mercer Street,
New York, NY 10012}
\email{juhijang@cims.nyu.edu}
\author[N. Jiang]{Ning Jiang}
\address{Courant Institute of Mathematical Sciences, 251 Mercer Street,
New York, NY 10012}
\email{njiang@cims.nyu.edu}

\maketitle

\begin{abstract}
We study the acoustic limit from the Boltzmann equation in the framework of
classical solutions. For a solution $F_\varepsilon=\mu
+\varepsilon \sqrt{\mu}f_\varepsilon$ to the rescaled Boltzmann
equation in the acoustic time scaling
\begin{equation*}
\partial_t F_\varepsilon +\vgrad F_\varepsilon
=\frac{1}{\varepsilon} \Q(F_\varepsilon,F_\varepsilon)\,,
\end{equation*}
inside a periodic box $\mathbb{T}^3$, we establish the
global-in-time uniform energy estimates of $f_\varepsilon$ in
$\varepsilon$ and prove that $f_\varepsilon$ converges strongly to
$f$ whose dynamics is governed by the acoustic system. The collision
kernel $\Q$ includes hard-sphere interaction and inverse-power law
with an angular cutoff.

\end{abstract}

\section{Introduction}
The acoustic system is the linearization about the homogeneous state of
the compressible Euler system. After a suitable choice of units,
in this model the fluid fluctuations $(\rho, u, \theta)$ satisfy
\begin{equation}\label{acoustic-system}
\begin{aligned}
\partial_t \rho+\DIV u=0\,,\qquad &\rho(x,0)=\rho^{0}(x)\,,\\
\partial_t u+\grad(\rho+\theta)=0\,,\qquad & u(x,0)=u^{0}(x)\,,\\
\partial_t \theta+\tfrac{2}{3}\DIV u=0\,,\qquad &\theta(x,0)=\theta^{0}(x)\,.
\end{aligned}
\end{equation}
In this paper, we consider the periodic boundary condition,
i.e $x \in \mathbb{T}^3$.

This is one of the simplest system of fluid dynamical equations
imaginable, being essentially the wave equation. It may be derived
directly from the Boltzman equation as the formal limit of moment
equations for an appropriately scaled family of Boltzmann solutions
as the Knudsen number tends to zero.

The program initiated by Bardos, Golse, and Levermore \cite{BGL2}
was to derive the fluid limits which include incompressible Stokes,
Navier-Stokes, Euler equations, and acoustic system from the {\em
DiPerna-Lions renormalized} solutions. This program has been
developed with great success during the last decade, here we only
mention \cite{BGL2, BGL, GL, GS, LM, LM1, LM2} among others. In
particular, Golse and Saint-Raymond  \cite{GS} justified the first
complete incompressible Navier-Stokes limit from the Boltzmann
equation without any compactness assumption.  On the other hand,
higher order approximations with the unified energy method have been
shown by Guo \cite{G} to give rise to a rigorous passage from the
Boltzmann equations to the Navier-Stokes-Fourier systems beyond the
Navier-Stokes approximations in the framework of classical
solutions.

Surprisingly, the status for rigorously deriving the acoustic system
from DiPerna-Lions solutions of Boltzmann equation is still
incomplete. This is mainly because DiPerna-Lions solutions do not
have some properties which are formally satisfied such as local
conservation laws. In \cite{BGL}, the acoustic limit
 was justified for Maxwell
molecular collisions under some assumption on the amplitude of
fluctuations. The result was significantly improved in \cite{GL} to
a large class of hard potentials and the assumption of the amplitude
of fluctuations was relaxed to the order $\eps^m$ with
$m>\frac{1}{2}$. Recently, the borderline case $m=\frac{1}{2}$ was
covered in \cite{JLM} for soft potentials.

In this paper, we take the first step to establish the acoustic
limit from the Boltzmann equation in the framework of classical
solutions. Working with classical solutions has several advantages
than working with the DiPerna-Lions solutions. For example, the
classical solutions automatically satisfy local conservation laws
and have good regularities; the nonlinear interaction can be
controlled by linear dissipation for small solutions.

We employ the nonlinear energy method developed by Guo \cite{G1, G2,
G} in recent years which has been turned out to be applicable to
other problems, for instance see \cite{J}. We justify the limit for
the case that the amplitude of fluctuation is $\eps$, which is not
being optimal. However, our work has advantages in that we can treat
for a large class of collision kernels in a rather uniform way,
including hard potentials, soft potentials and especially Landau
kernels which were not covered in the framework of the renormalized
solutions. Furthermore, different dissipation mechanisms for
macroscopic parts and microscopic parts in the limit process are
clearly presented by the energy dissipation rate. To our best
knowledge, this is the first global-in-time acoustic limit result in the class of
classical solutions.

The paper is organized as follows: the next section contains the
formulation of the Boltzmann equation for different collision
kernels. Some preliminary lemmas regarding the estimates on the
collision operators are listed in Section 3. Then we give a very
brief formal derivation. Section 5 and 6 are devoted to the energy
estimates.

\section{Formulation and Notations}

Consider the following rescaled Boltzmann equation:
\begin{align}\label{R_B}
\partial_t F_\eps  +\vgrad F_\eps
=\frac{1}{\varepsilon} \Q(F_\eps ,F_\eps )
\end{align}
In this paper, as in \cite{G}, we consider two classes of collision
kernels, the first is given by the standard Boltzmann collision
operator $\Q(G_1,G_2)$:
\begin{align}\label{hard}
&&\Q(G_1,G_2)=\int_{{\mathbb{ R}}^3\times S^2}|u-v|^\gamma B(\theta)
|\{G_1(v^{\prime })G_2(u^{\prime })-G_1(v)G_2(u)\}dud\omega,
\end{align}
where $-3<\gamma\leq 1$, $B(\theta)\leq C|\cos\theta|$, $\text{
}v^{\prime }=v-[(v-u)\cdot \omega ]\omega\text{ }$ and $\text{
}u^{\prime }=u+[(v-u)\cdot \omega ]\omega.$ These collision
operators cover hard-sphere interactions and inverse-power law with
an angular cutoff. The hard potential means $0\leq \gamma\leq 1$,
and the soft potential means $-3<\gamma<0$.

The second class is the Landau collision operator
\begin{equation}\label{landau}
\Q(G_1, G_2)=\sum\limits_{1 \leq i,j \leq 3}\p^i\int\phi_{ij}(v-u)
\{G_1(u)\p^j G_2(v)-G_2(v)\p^j G_1(u)\} \dd u\,,
\end{equation}
where $\p^i=\p_{v_i}$ and
\begin{equation}
\phi_{ij}\equiv \frac{1}{|v|}\left\{\delta_{ij}-\frac{v_i
v_j}{|v|^2} \right\}\,.
\end{equation}

Let
\[
F_\eps =\mu +\varepsilon \sqrt{\mu}f_\eps
\]
be the perturbation around the global Maxwellian
$$\mu=\frac{1}{(2\pi)^{3/2}}e^{-\frac{|v|^2}{2}}.$$ Define
$\mathcal{L}$, the linearized collision operator, as follows
\begin{align}\label{L}
\mathcal{L}g\equiv -\frac{1}{\sqrt{\mu}}\{\Q(\mu,\sqrt{\mu}g)+
\Q(\sqrt{\mu}g,\mu)\},
\end{align}
and the nonlinear collision operator $\Gamma$ as
\begin{align}\label{N}
\Gamma(g,h)=\frac{1}{\sqrt{\mu}} \Q(\sqrt{\mu}g,\sqrt{\mu}h).
\end{align}
The rescaled Boltzmann equation \eqref{R_B} is written in terms of
the perturbation $f_\eps $ as follows:
\begin{align}\label{P_B}
\partial_t f_\eps  +v\cdot\nabla_xf_\eps
+\frac{1}{\varepsilon} \mathcal{L}f_\eps =\Gamma(f_\eps , f_\eps ).
\end{align}
We first recall that the operator $\mathcal{L}\geq0,$ and for
any fixed $(t,x),$ the null space of $\mathcal{L}$ is generated by $[\sqrt{\mu}%
,v\sqrt{\mu},|v|^{2}\sqrt{\mu}]$. For any function $f(t,x,v)$ we
thus can decompose
\[
f=\mathbf{P}f+(\mathbf{I-P})f
\]
where $\mathbf{P}f$ (the hydrodynamic part) is the $L_{v}^{2}$
projection on the null space for $\mathcal{L}$ for given $(t,x).$ We
can further denote
\begin{equation}
\mathbf{P}f=\{\rho_{f}(t,x)+v\cdot u_{f}(t,x)+(\tfrac{|v|^{2}}{2}-\tfrac{3}%
{2})\theta_{f}(t,x)\}\sqrt{\mu}. \label{hfield}%
\end{equation}
Here we define the \textit{hydrodynamic field} of $f$ as
\[
\lbrack\rho_{f}(t,x),u_{f}(t,x),\theta_{f}(t,x)]
\]
which represents the density, velocity and temperature fluctuations
physically.

In order to state  our results precisely, we introduce the following
norms and notations. We use $\langle\cdot\,,\cdot\rangle$ to denote
the standard $L^{2}$ inner product in $\mathbb{R}_{v}^{3},$ while we
use $(\cdot\,,\cdot)$ to denote  the $L^{2}$ inner product either in
$\mathbb{T}^{3}\times\mathbb{R}^{3}$ or in $\mathbb{T}^{3}$ with
corresponding the $L^{2}$ norm $\|\cdot\|.$ We use the standard
notation $H^{s}$ to denote the Sobolev space $W^{s,2}.$ For the
Boltzmann collision operator (\ref{hard}), we define the collision
frequency as
\begin{equation}
\nu (v)\equiv \int_{\mathbb{R}^3} |v-v'|^\gamma\mu(v')dv',
\label{nu}
\end{equation}
which behaves like $|v|^\gamma$ as $|v|\rightarrow\infty.$ It is
natural to define the following weighted $L^{2}$ norm to
characterize the dissipation rate.
\[
|g|_{\nu}^{2} \equiv\int_{\mathbb{R}^{3}}g^{2}(v)\nu(v)dv,\;\;\;
\;\|g\|_{\nu}^{2}   \equiv\int_{\mathbb{T}^{3}\times\mathbb{R}^{3}}%
g^{2}(x,v)\nu(v)\dd v\dd x.
\]
For the Landau operator \eqref{landau}. let
\begin{equation}
\sigma_{ij}(v)=\int_{\mathbb{R}^3}\frac{1}{|v-u|}\left\{
\delta_{ij}- \frac{(v-u)_i(v-u)_j}{|v-u|^2}\right\} \mu(u)\,\dd u\,.
\end{equation}
The natural norms are given by the $\sigma\mbox{-}$norm
\begin{equation*}
\begin{split}
|g|^2_\sigma &\equiv \sum\limits_{1 \leq i,j \leq 3}
\int_\mathbb{R}^3
\{ \sigma_{ij}\p^i g\p^j g +\sigma_{ij} v^i v^j g^2\}\,\dd v\,,\\
\|g\|^2_\sigma &\equiv \sum\limits_{1 \leq i,j \leq 3}
\int_{\mathbb{R}^3 \times \mathbb{T}^3} \{ \sigma_{ij}\p^i g\p^j g
+\sigma_{ij} v^i v^j g^2\}\,\dd v\dd x\,.
\end{split}
\end{equation*}
We also use a unified notation for the dissipation as $|g|_D$ and
$\|g\|_D$ to denote either $|g|_\nu$ or $|g|_\sigma$, $\|g\|_\nu$ or
$\|g\|_\sigma$ respectively. Let the weight function $w(v)$ be
\[
 w(v)\equiv |(1+|v|^2)^{\frac{1}{2}}.
\]
For both Boltzmann and Landau kernels we have
\begin{equation}
\|w^{-3/2}g\|\leq C \|g\|_D\,.
\end{equation}
See \cite{G} for the details.

In order to be consistent with the hydrodynamic equations, we define
\begin{equation}
\partial_{\alpha}^{\beta}=\partial_{x_{1}}^{\alpha_{1}}\partial_{x_{2}%
}^{\alpha_{2}}\partial_{x_{3}}^{\alpha_{3}}\partial_{v_{1}}^{\beta_{1}%
}\partial_{v_{2}}^{\beta_{2}}\partial_{v_{3}}^{\beta_{3}} \label{derivative}%
\end{equation}
where $\alpha=[\alpha_{1},\alpha_{2},\alpha_{3}]$ is related to the
space derivatives, while $\beta=[\beta_{1},\beta_{2},\beta_{3}]$ is
related to the velocity derivatives.

We now define instant energy functionals and the dissipation
rate.\\

\textbf{Definition 1 (Instant Energy) }\textit{For
}$N\geq8$,\textit{\ for some constant }$C>0,$\textit{\ an instant
energy functional }$\mathcal{E}_{N,l}(f)(t)\equiv
\mathcal{E}_{N,l}(t)$ \textit{satisfies:}\\

 (i) for hard potentials
with $0\leq \gamma\leq 1$ in \eqref{hard}
\begin{equation}\label{E}%
\frac{1}{C}\mathcal{E}_{N,l}(t)\leq\sum_{|\alpha|\leq
N+1}\|\p_\alpha f\|^2
+\sum_{|\alpha|+|\beta|\leq N}%
\|\wl\pab f\|^{2}\leq C\mathcal{E}_{N,l}(t)\,;
\end{equation}

(ii) for soft potentials with $-3<\gamma< 0$ in \eqref{hard}
\begin{equation}\label{E2}%
\frac{1}{C}\mathcal{E}_{N,l}(t)\leq
\sum_{|\alpha|\leq N+1}\|\p_\alpha f\|^2+\sum_{|\alpha|+|\beta|\leq N}%
\|w^{\{l-|\beta|\}|\gamma|}\pab f\|^{2}\leq C\mathcal{E}_{N,l}(t)\,;
\end{equation}

(iii) for the Landau kernel \eqref{landau},
\begin{equation}\label{E3}%
\frac{1}{C}\mathcal{E}_{N,l}(t)\leq
\sum_{|\alpha|\leq N+1}\|\p_\alpha f\|^2+\sum_{|\alpha|+|\beta|\leq N}%
\|w^{l-|\beta|}\pab f\|^{2}\leq C\mathcal{E}_{N,l}(t).\,,
\end{equation}
for all functions $f(t,x,v)$.\\

\textbf{Definition 2 (Dissipation Rate) }\textit{For
}$N\geq8$,\textit{\ the dissipation rate
}$\mathcal{D}_{N}(t)$\textit{\ is defined as}\\

(i) for hard potentials with $0\leq \gamma\leq 1$ in \eqref{hard}
\begin{equation}\label{D}
\begin{split}
\mathcal{D}_{N,l}(t)=\sum_{|\alpha|\leq N+1}\left(\eps\|
\partial_{\alpha}\mathbf{P}f\|^{2}(t)+\frac{1}{\eps}\|\p_\alpha
(\mathbf{I-P})f\|^2_\nu\right)
\\+\frac{1}{\varepsilon} \sum_{|\alpha|+|\beta|\leq
N}\|\wl\pab(\mathbf{I-P})f \|_{\nu}^{2};
\end{split}
\end{equation}

(ii) for soft potentials with $-3<\gamma< 0$ in \eqref{hard}
\begin{equation}\label{D2}
\begin{split}
 \mathcal{D}_{N,l}(t)=\sum_{|\alpha|\leq N+1}\left(\eps\|
\partial_{\alpha}\mathbf{P}f\|^{2}(t)+\frac{1}{\eps}\|\p_\alpha
(\mathbf{I-P})f\|^2_\nu\right)
\\+\frac{1}{\varepsilon} \sum_{|\alpha|+|\beta|\leq
N}\|w^{\{l-|\beta|\}|\gamma|}\pab(\mathbf{I-P})f \|_{\nu}^{2}.
\end{split}
\end{equation}

(iii) for the Landau kernel \eqref{landau},
\begin{equation}\label{D3}
\begin{split}
 \mathcal{D}_{N,l}(t)=\sum_{|\alpha|\leq N+1}\left(\eps\|
\partial_{\alpha}\mathbf{P}f\|^{2}(t)+\frac{1}{\eps}\|\p_\alpha
(\mathbf{I-P})f\|^2_\nu\right)
\\+\frac{1}{\varepsilon} \sum_{|\alpha|+|\beta|\leq
N}\|w^{l-|\beta|}\pab(\mathbf{I-P})f \|_{\nu}^{2}.
\end{split}
\end{equation}\

Both the instant energy and the dissipation rate are carefully
designed to capture the structure of the rescaled Boltzmann equation
\eqref{R_B} in the acoustic regime. For soft potentials,
$\mathcal{E}_{N,l}$ and $\mathcal{D}_{N,l}$ involve a weight
function in $v$ which depends on the number of velocity derivatives
$\partial^\beta$. This is designed to control the velocity
derivatives for the streaming terms $v\cdot\nabla_x$ by a weak
dissipation rate as proposed in \cite{G}. In particular, the
dissipation rates in \eqref{D}, \eqref{D2}, \eqref{D3} in which the
hydrodynamic part has $\varepsilon$ scale
 reflect that we do not observe the dissipation in the limit, which is
exactly the case of the acoustic system.

We state the main result of this article.

\begin{thm}
\label{Acoustic} Let $N\geq 8$. Let $0<\varepsilon\leq \frac{1}{4}$
be
 given.  Suppose $f_\eps  (0,x,v)=f_0^\varepsilon
(x,v)$ satisfies the mass, momentum, and energy conservation laws
\begin{equation}
 (f_0^\varepsilon,[1,v,|v|^2]\sqrt{\mu})=0,
\end{equation}
and $F_\eps  (0,x,v)=\mu+\varepsilon f_0^\varepsilon (x,v)\geq 0$.
If $\;\mathcal{E}_{N,l}(f^{\varepsilon})(0)$ is sufficiently small,
then there exists a unique global-in-time solution $f_\eps (t,x,v)$
to \eqref{P_B}, and moreover there exists an instant energy
functional $\mathcal{E}_{N,l}(f^{\varepsilon})(t)$ such that
\begin{equation}
\frac{d}{dt}\mathcal{E}_{N,l}(f^{\varepsilon})(t)+\mathcal{D}_{N,l}%
(f^{\varepsilon})(t)\leq0. \label{en}%
\end{equation}
In particular, we have the following global energy bound:
\begin{equation}
\sup_{0\leq t\leq\infty}\mathcal{E}_{N,l}(f^{\varepsilon})(t)\leq
\mathcal{E}_{N,l}(f^{\varepsilon})(0). \label{enbound}%
\end{equation}
\end{thm}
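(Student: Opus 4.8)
The plan is to run Guo's nonlinear energy method: combine a local existence result with a uniform-in-$\eps$ a priori estimate of the form \eqref{en}, and close by a continuity (bootstrap) argument under the smallness of $\E_{N,l}(0)$. Local existence of classical solutions to \eqref{P_B} for each fixed $\eps$ is standard (iteration on the linear equation together with the collision estimates of Section 3), so the entire difficulty is concentrated in producing an instant energy functional $\E_{N,l}(t)$, equivalent to the norms in \eqref{E}--\eqref{E3}, together with its dissipation $\D_{N,l}(t)$ as in \eqref{D}--\eqref{D3}, for which the differential inequality $\frac{d}{dt}\E_{N,l}+\D_{N,l}\le0$ holds as long as the solution stays small. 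Once this is established on the maximal existence interval, $\E_{N,l}(t)\le\E_{N,l}(0)$ follows by integration, the smallness is propagated in time, the local solution extends globally, and \eqref{enbound} is just \eqref{en} integrated.

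For the a priori estimate I would first treat the pure spatial derivatives $\pa\fe$ with $\aaN$. Applying $\pa$ to \eqref{P_B} and pairing with $\pa\fe$, the streaming term $\langle\vgrad\pa\fe,\pa\fe\rangle$ integrates to zero on $\TT$, while the coercivity and self-adjointness of $\LL$ (recall $\LL\ge0$ with null space the collision invariants) produce the microscopic dissipation $\frac1\eps\norm{\pa\ipf}_\nu^2$. Since $\Gamma$ takes values in the orthogonal complement of the null space, the nonlinear contribution $\langle\pa\Gamma(\fe,\fe),\pa\fe\rangle$ only sees $\pa\ipf$ on the test side; using the collision estimates of Section 3 it is bounded by $C\sqrt{\E_{N,l}}\,\D_{N,l}$ and absorbed under smallness. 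The mixed derivatives $\pab\fe$ with $\abN$ are handled similarly, except that differentiating $\vgrad$ in $v$ generates lower-order streaming terms; these are exactly what the $\beta$-dependent velocity weights ($\wl$, $w^{\{l-|\beta|\}|\gamma|}$, $w^{l-|\beta|}$ in \eqref{E}--\eqref{E3}) are designed to control, by trading one velocity derivative against the weighted microscopic dissipation in \eqref{D}--\eqref{D3}. Working throughout with the unified dissipation norm $\norm{\cdot}_D$ lets the hard-sphere, soft-potential, and Landau cases be carried out in parallel.

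The main obstacle is the macroscopic estimate, i.e.\ recovering dissipation on $\pf$, which the previous pairings do not provide. Here I would use the local conservation laws: applying the projection $\mathbf{P}$ to \eqref{P_B} and writing $\pf=\{\rho_{\fe}+v\cdot u_{\fe}+(\frac{|v|^2}{2}-\frac32)\theta_{\fe}\}\sqrt\mu$ as in \eqref{hfield} yields, modulo microscopic source terms coming from $\ipf$, the acoustic system \eqref{acoustic-system} for $(\rho_{\fe},u_{\fe},\theta_{\fe})$. One then constructs an interaction functional $G_{N}(t)$, built from the appropriate pairings of $\pa\pf$ and $\pa\ipf$, whose time derivative controls $\sum_{\aaN}\norm{\pa\pf}^2$ up to a multiple of the microscopic dissipation already in hand. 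Crucially, since the fluid limit is the \emph{non-dissipative} acoustic system, the macroscopic dissipation can only be recovered at order $\eps$; this is precisely the $\eps\norm{\pa\pf}^2$ term in \eqref{D}--\eqref{D3}, and the correct final functional is $\E_{N,l}(t)+\kappa\,\eps\,G_{N}(t)$ for a small constant $\kappa$, which remains equivalent to $\E_{N,l}$ while its dissipation gains the macroscopic piece.

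The delicate point throughout is the bookkeeping of powers of $\eps$: the linearized dissipation is $O(1/\eps)$ on the microscopic part but only $O(\eps)$ on the macroscopic part, whereas $\Gamma(\fe,\fe)$ carries no compensating power of $\eps$ in \eqref{P_B}. The cubic terms in which the macroscopic part appears must therefore be shown to come with enough factors of $\eps$ (supplied by the scaling $F_\eps=\mu+\eps\sqrt\mu\fe$ and by Young's inequality against the weak macroscopic dissipation) so that every nonlinear term is dominated by $C\sqrt{\E_{N,l}}\,\D_{N,l}$. This matching of scales is exactly why the argument requires the fluctuation amplitude to be $\eps$, and it is the step I expect to require the most care; once it is done, smallness of $\E_{N,l}$ gives $\frac{d}{dt}\E_{N,l}+\D_{N,l}\le C\sqrt{\E_{N,l}}\,\D_{N,l}\le\frac12\D_{N,l}$, hence \eqref{en}.
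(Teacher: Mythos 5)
Your proposal follows essentially the same route as the paper: Guo's nonlinear energy method with the macro--micro decomposition, pure spatial estimates giving the $O(1/\eps)$ microscopic dissipation, an $\eps$-weighted interaction functional $G(t)$ whose time derivative recovers the $O(\eps)$ macroscopic dissipation (the paper's modified energy is exactly of your form $C_1\sum_{\aaN}\|\pa \fe\|^2-\eps\delta G(t)$ plus the weighted mixed-derivative norms), $\beta$-dependent velocity weights for the streaming commutators, nonlinear terms bounded by $C\,\E_{N,l}^{1/2}\D_{N,l}$, and closure by smallness and continuity. The only step you leave under-specified --- how the interaction functional actually controls $\|\nabla_x\pa a^\eps\|^2$, $\|\nabla_x\pa b^\eps\|^2$, $\|\nabla_x\pa c^\eps\|^2$ --- is carried out in the paper (Lemma \ref{positivity}) via the full macroscopic equations obtained by matching coefficients on the basis $[\sqrt{\mu}, v_i\sqrt{\mu}, v_iv_j\sqrt{\mu},v_i|v|^2\sqrt{\mu}]$ together with the local conservation laws and the Poincar\'e inequality, not merely the projection of \eqref{P_B} onto the null space of $\mathcal{L}$; this is the standard mechanism and your sketch is consistent with it.
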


\begin{rem}
 The global existence of solutions $f_\eps $ to \eqref{P_B} follows from
the a priori
 global energy bound \eqref{enbound} by rather standard method.
In this article, we focus on proving the uniform bound.
\end{rem}

\begin{rem} Note that due to the weak dissipation \eqref{D}, we cannot deduce
the time decay estimate from the energy inequality \eqref{en} unlike
the incompressible Navier-Stokes-Fourier case in \cite{G, J}.
Indeed, physically, we do not expect any time decay of our instant
energy $\mathcal{E}_{N,l}(f^{\varepsilon})(t)$, since the acoustic
system preserves the initial energy for all time. See Lemma
\ref{regularity}.
\end{rem}

\section{Basic estimates of collision operators}

In this section, we sum up some basic estimates of collision
operators for various kernels considered in this paper. The proofs
can be found in \cite{G}. The following is the coercivity of
$\mathcal{L}$.

\begin{lem} There exists $\delta >0$ such that for any $f\in L^2
(\mathbb{R}_v^3)$
\begin{align}\label{coer}
\langle \mathcal{L}f,f \rangle \geq  \delta |(\mathbf{I-P})f|_\nu^2.
\end{align}
\end{lem}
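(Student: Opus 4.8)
The plan is to reduce the inequality to a spectral gap for $\mathcal{L}$ on the orthogonal complement of its null space, and then to prove that gap by a compactness argument. Since $\mathcal{L}$ is self-adjoint on $L^2(\mathbb{R}^3_v)$ and annihilates the hydrodynamic part, writing $f=\mathbf{P}f+(\mathbf{I-P})f$ gives $\langle\mathcal{L}f,f\rangle=\langle\mathcal{L}(\mathbf{I-P})f,(\mathbf{I-P})f\rangle$, because the cross terms vanish: $\langle\mathcal{L}(\mathbf{I-P})f,\mathbf{P}f\rangle=\langle(\mathbf{I-P})f,\mathcal{L}\mathbf{P}f\rangle=0$. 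Thus it suffices to prove $\langle\mathcal{L}g,g\rangle\geq\delta\,|g|_\nu^2$ for every $g=(\mathbf{I-P})g$, i.e. for $g$ orthogonal to $\mathcal{N}=\mathrm{span}\{\sqrt{\mu},v\sqrt{\mu},|v|^2\sqrt{\mu}\}$.

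Next I would invoke the classical Grad decomposition $\mathcal{L}=\nu(v)-K$ for the Boltzmann kernels \eqref{hard}, where $\nu$ is the collision frequency \eqref{nu} and $K$ is an integral operator with an explicit Carleman kernel whose decay is governed by $\mu$; the key structural input is that $K$ is compact on the relevant $L^2$ space. For the Landau operator \eqref{landau} the analogous splitting separates the leading diffusion part, which generates the $\sigma$-norm, from a lower-order relatively compact remainder. In all cases one obtains $\langle\mathcal{L}g,g\rangle=|g|_D^2-\langle Kg,g\rangle$, with $|g|_D$ equal to $|g|_\nu$ or $|g|_\sigma$ as appropriate, so that coercivity is reduced to controlling the sign-indefinite term $\langle Kg,g\rangle$ away from $\mathcal{N}$.

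I would then establish the gap by contradiction. If no such $\delta$ existed there would be a sequence $g_n\perp\mathcal{N}$ with $|g_n|_\nu=1$ and $\langle\mathcal{L}g_n,g_n\rangle\to0$. Extracting a weak limit $g_n\rightharpoonup g$, the compactness of $K$ upgrades this to $\langle Kg_n,g_n\rangle\to\langle Kg,g\rangle$, while the weighted norm is weakly lower semicontinuous, so $0=\lim\langle\mathcal{L}g_n,g_n\rangle\geq|g|_\nu^2-\langle Kg,g\rangle=\langle\mathcal{L}g,g\rangle\geq0$. Hence $\langle\mathcal{L}g,g\rangle=0$, which forces $g\in\mathcal{N}$; since $\mathcal{N}$ is weakly closed and each $g_n\perp\mathcal{N}$, we also have $g\perp\mathcal{N}$, whence $g=0$. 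But then $\langle Kg_n,g_n\rangle\to0$, contradicting $1=|g_n|_\nu^2=\langle\mathcal{L}g_n,g_n\rangle+\langle Kg_n,g_n\rangle\to0$. This produces the uniform lower bound $\delta>0$.

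The main obstacle is securing the compactness of $K$ in the correct weighted topology uniformly across the three kernel classes. For hard potentials this is Grad's lemma, but for soft potentials $\nu(v)$ degenerates as $|v|\to\infty$, so strong convergence of $Kg_n$ must be extracted on weighted spaces; here the auxiliary bound $\|w^{-3/2}g\|\leq C\|g\|_D$ is exactly what controls the large-velocity tails. For the Landau kernel the difficulty is of a different nature, since $\mathcal{L}$ is an unbounded diffusion operator whose $\sigma$-norm contains velocity-gradient terms, so the compactness must be combined with a Poincar\'e-type inequality to absorb the first-order contribution. As all of these ingredients are available in \cite{G}, I would cite them and assemble the three steps above.
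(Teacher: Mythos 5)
The paper does not prove this lemma at all: Section 3 states it as a known basic estimate and defers entirely to \cite{G} (and the literature behind it) for the proof. Measured against that standard proof, your outline is the right one and essentially the same route: reduce to $g=(\mathbf{I-P})g$ via self-adjointness, split $\mathcal{L}=\nu-K$ (Grad/Hilbert decomposition, or $\mathcal{L}=-A-K$ with $\langle -Ag,g\rangle=|g|_\sigma^2$ in the Landau case), characterize the equality case $\langle\mathcal{L}g,g\rangle=0$ by the kernel of $\mathcal{L}$, and obtain the gap $\delta$ by a compactness--contradiction argument.

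There is, however, one step where your argument as literally written would fail, and it is precisely the step you identify as the ``main obstacle'' without resolving it. For soft potentials ($-3<\gamma<0$) the normalization $|g_n|_\nu=1$ gives a bound only in the degenerate weighted space $L^2(\nu\,dv)$ with $\nu(v)\sim|v|^\gamma\to0$; this does not bound $g_n$ in $L^2$, so you cannot extract a weak limit in the space on which $K$ is compact, and $K$ is \emph{not} compact on $L^2(\nu\,dv)$. The phrase ``the compactness of $K$ upgrades this to $\langle Kg_n,g_n\rangle\to\langle Kg,g\rangle$'' therefore has no justification in that regime. The standard repair is not a global compactness statement but the localized estimate
\begin{equation*}
|\langle Kg,g\rangle|\ \leq\ \eta\,|g|_\nu^2+C_\eta\,\bigl|\mathbf{1}_{\{|v|\leq R(\eta)\}}\,g\bigr|_{L^2}^2\,,
\end{equation*}
which confines the sign-indefinite part to a bounded velocity region where $\nu$ is bounded below and the usual compactness applies; the contradiction argument is then run on that bounded region. (A similar localization, combined with the exact identity $\langle -Ag,g\rangle=|g|_\sigma^2$, handles the Landau case.) The auxiliary bound $\|w^{-3/2}g\|\leq C\|g\|_D$ that you invoke does not substitute for this: it controls a fixed negative power of $w$, not the unweighted $L^2$ norm needed for weak compactness. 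With that localization inserted, your proof is complete and coincides with the one in the cited reference.
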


\begin{lem}\label{Hard-Est}
For hard potential with $\gamma\geq 0$, there exits $C_{|\beta|}$,
$C>0$ such that
\begin{equation}\label{linear}
(w^{2l}\pab\mathcal{L}f\,,\pab f)\geq \frac{1}{2}\|w^l\pab
f\|^2_\nu-C_{|\beta|}\|f\|^2_\nu\,,
\end{equation}
\begin{equation}\label{nonlinear}
(\partial_\alpha^\beta\Gamma(f, g),\partial_\alpha^\beta h)\leq C
\{\|\wl\partial_{\alpha_1}^{\beta_1}
f\|\cdot\|\wl\partial_{\alpha_2}^{\beta_2} g\|_\nu
+\|\wl\partial_{\alpha_1}^{\beta_1}
g\|\cdot\|\wl\partial_{\alpha_2}^{\beta_2}
f\|_\nu\}\|\wl\partial_\alpha^\beta h\|_\nu\,.
\end{equation}
where $l\geq 0$, and summation is for $|\alpha|+|\beta|\leq N$ with
$\beta_1+\beta_2\leq\beta$ and $\alpha_2\leq \alpha$ componentwise.
\end{lem}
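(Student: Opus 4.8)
The plan is to establish the two bounds \eqref{linear} and \eqref{nonlinear} separately, in each case differentiating the collision operator by the Leibniz rule and reducing matters to the undifferentiated structure available for $\gamma\geq 0$.

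For the linear bound \eqref{linear} I would start from Grad's splitting $\mathcal{L}=\nu(v)-K$, where the collision frequency $\nu(v)\sim\langle v\rangle^\gamma$ acts by multiplication and $K$ is the integral part whose kernel $k(v,v')$, together with all its $v$-derivatives, decays like a Gaussian in $(v,v')$. Since $\mathcal{L}$ commutes with the spatial derivatives $\partial_x^\alpha$, it suffices to study $\p_v^\beta\mathcal{L}(\partial_x^\alpha f)$. Expanding $\p_v^\beta(\nu\,\partial_x^\alpha f)$, the top term $\nu\,\pab f$, paired against $w^{2l}\pab f$, produces precisely the coercive quantity $\|\wl\pab f\|_\nu^2$, while every other term carries a derivative on $\nu$; the pointwise bounds $|\p_v^{\beta_0}\nu(v)|\lesssim\langle v\rangle^\gamma$ make these strictly lower order in $|\beta|$. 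For the compact part $K$ I would differentiate its kernel and split the $v$-integral into $|v|\le R$ and $|v|>R$: on the tail the weighted, differentiated operator has small norm relative to the $\nu$-weighted space once $R$ is large, so those contributions are absorbed into a fraction of $\|\wl\pab f\|_\nu^2$ and fix the constant $\tfrac12$, whereas on the ball $|v|\le R$ all weights and derivative orders are comparable and Cauchy--Schwarz with Young's inequality controls them by $C_{|\beta|}\|f\|_\nu^2$. An induction on $|\beta|$, with base case the coercivity estimate \eqref{coer}, reabsorbs the remaining lower-order derivative terms into the same remainder $C_{|\beta|}\|f\|_\nu^2$.

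For the nonlinear bound \eqref{nonlinear} I would use that $\Gamma$ is bilinear and local in $(t,x)$: applying $\pab$ by Leibniz, the spatial derivatives split as $\alpha_1+\alpha_2=\alpha$ across the two arguments, while the velocity derivatives split both across the arguments and onto the kernel $|u-v|^\gamma B(\theta)$. Differentiating the kernel preserves its $|u-v|^\gamma$ homogeneity up to constants, so each resulting term is again a collision-type form $\Gamma_\ast(\partial_{\alpha_1}^{\beta_1}f,\partial_{\alpha_2}^{\beta_2}g)$. Pairing with $w^{2l}\pab h$ and invoking the basic weighted trilinear estimate $|(w^{2l}\Gamma_\ast(F,G),H)|\lesssim\|\wl F\|\,\|\wl G\|_\nu\,\|\wl H\|_\nu$---which follows from the Carleman change of variables in the collision integral, the Gaussian factors carried by $\sqrt\mu$ defeating the polynomial growth $\langle v\rangle^\gamma$ for $\gamma\le 1$, and the fact that the weight $w^l$ can be distributed across the three slots---and then summing the finitely many terms subject to $\beta_1+\beta_2\le\beta$ and $\alpha_2\le\alpha$, one obtains \eqref{nonlinear}.

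\emph{The main obstacle} is the linear estimate: extracting the clean coefficient $\tfrac12$ in front of $\|\wl\pab f\|_\nu^2$ while keeping the remainder as the plain $C_{|\beta|}\|f\|_\nu^2$ forces a careful large-velocity cutoff analysis of the differentiated compact operator $K$ and precise control of its weighted kernel derivatives. It is exactly the weight $w^{2l}$ together with $\gamma\ge 0$ that renders the tail contributions absorbable into the dissipation, so the hard-potential hypothesis is used in an essential way at this step.
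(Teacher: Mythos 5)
First, note that the paper itself gives no proof of Lemma \ref{Hard-Est}: it is quoted verbatim from \cite{G} (``The proofs can be found in \cite{G}''), so the comparison is really against the argument in that reference. Your outline does follow its route --- Grad's splitting $\mathcal{L}=\nu(v)-K$ with a large-velocity cutoff to absorb the compact part for \eqref{linear}, and Leibniz plus a weighted trilinear bound for \eqref{nonlinear} --- but one step in the nonlinear part, as you describe it, would actually fail. You propose to let the velocity derivatives fall ``onto the kernel $|u-v|^\gamma B(\theta)$'' and assert that differentiation ``preserves its $|u-v|^\gamma$ homogeneity up to constants.'' That is not true: each $v$-derivative of $|u-v|^\gamma$ lowers the homogeneity by one, so after $|\beta|$ derivatives (with $|\beta|$ as large as $N\geq 8$) you would face $|u-v|^{\gamma-|\beta|}$, a non-integrable singularity at $u=v$ in $\mathbb{R}^3$. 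The actual proof avoids ever differentiating the kinetic factor: one first translates $u\mapsto u+v$ in the collision integral, after which the kernel becomes $|u|^\gamma$, independent of $v$, and $\partial_v^\beta$ distributes only over the Maxwellian factors (producing harmless polynomial-times-Gaussian terms) and over the two arguments. Only after this change of variables does your reduction to trilinear forms $\Gamma_*(\partial^{\beta_1}_{\alpha_1}f,\partial^{\beta_2}_{\alpha_2}g)$, and hence the summation structure $\beta_1+\beta_2\leq\beta$, $\alpha_2\leq\alpha$, go through.

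A second, more minor point concerns the linear estimate: for $\alpha\neq 0$ the remainder cannot literally be $C_{|\beta|}\|f\|^2_\nu$ with no derivatives at all --- no velocity-cutoff or compactness argument manufactures spatial derivatives out of none --- and indeed the paper later invokes \eqref{linear} in the form with remainder $\tfrac{C}{\varepsilon}\|\partial_\alpha(\mathbf{I-P})f_\varepsilon\|^2_\nu$. Your induction on $|\beta|$, with base case \eqref{coer}, should therefore close with a remainder $C_{|\beta|}\|\partial_\alpha f\|^2_\nu$ (pure spatial derivatives), which is what the splitting into $|v|\leq R$ and $|v|>R$ genuinely yields; the statement in the lemma is simply loose on this point. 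With these two corrections the architecture of your argument coincides with that of the cited source.
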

\begin{lem}\label{Soft-Est}
For the inverse power law with $-3 <\gamma <0$, for any $l \geq 0$,
there exist $C_{|\beta|}\,, C >0$ such that
\begin{equation}\label{linear-soft}
(w^{\{2l-2|\beta|\}|\gamma|}\pab \LL f\,,\pab f) \geq
\frac{1}{2}\|\wlbr\pab f\|^2_\nu -C_{|\beta|}\|f\|^2_\nu\,,
\end{equation}
\begin{equation}\label{nonlinear-soft}
\begin{split}
(w^{\{2l-2|\beta|\}|\gamma|}\pab\Gamma(f,g)\,,\pab h) &\leq C\{\|w^{\{l-|\beta_1|\}|\gamma|}\p^{\beta_1}_{\alpha_1} f\|\cdot\|w^{\{l-|\beta_2|\}|\gamma|}\p^{\beta_2}_{\alpha_2} g\|_\nu\\
&+\|w^{\{l-|\beta_1|\}|\gamma|}\p^{\beta_1}_{\alpha_1} g\|\cdot\|w^{\{l-|\beta_2|\}|\gamma|}\p^{\beta_2}_{\alpha_2} f\|_\nu\}\\
&\times \|\wlbr\pab h\|_\nu\,,
\end{split}
\end{equation}
where the summation is taken over $|\alpha_1|+|\beta_1| \leq
|\alpha|+|\beta|\leq [\tfrac{N}{2}]+4$, and $\alpha_2 \leq \alpha$
and $\beta_2 \leq \beta$ componentwise.
\end{lem}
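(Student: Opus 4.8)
The plan is to prove the two displayed estimates separately: \eqref{linear-soft} as a weighted coercivity bound for $\LL$ and \eqref{nonlinear-soft} as a weighted trilinear bound for $\Gamma$, both tuned to the soft-potential weight $w^{\{l-|\beta|\}|\gamma|}$ that sheds one factor of $|\gamma|$ per velocity derivative. For the linear estimate I abbreviate $s=\{l-|\beta|\}|\gamma|$, so the prefactor is $w^{2s}$ and the target dissipation is $\|\wlbr\pab f\|_\nu=\|w^{s}\pab f\|_\nu$. Writing $\LL=\nu(v)-K$, with $\nu(v)\sim\langle v\rangle^\gamma$ the collision frequency and $K$ the integral part, and noting that space derivatives pass freely through both $\nu$ and $K$, I apply $\pab$ and Leibniz in $v$. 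The principal term is $\nu\,\pab f$; pairing it with $w^{2s}\pab f$ produces exactly $\|w^{s}\pab f\|^2_\nu$, the desired leading contribution.

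The remaining contributions are absorbed as follows. When a velocity derivative $\p^{\beta'}$ with $\beta'\neq0$ falls on $\nu$, one gains decay $\p^{\beta'}\nu\lesssim\langle v\rangle^{\gamma-|\beta'|}$, so—since the weight exponent attached to the surviving $(\beta-\beta')$-derivative is strictly larger than $s$—each such term is dominated by $\eta\|w^{s}\pab f\|^2_\nu$ plus constants times norms carrying strictly fewer velocity derivatives. The integral part $K$ is handled through the pointwise kernel bounds for soft potentials (Grad-type estimates, cf. \cite{G}): the conjugated kernel $w^{s}(v)\,k(v,v')\,w^{-s}(v')$, together with its velocity derivatives, stays integrable with an arbitrarily small large-$|v|$ tail, whence $|(w^{2s}\pab Kf,\pab f)|\leq\eta\|w^{s}\pab f\|^2_\nu+C_\eta\|f\|^2_\nu$. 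Choosing $\eta$ small and inducting downward on $|\beta|$—using the coercivity \eqref{coer} and $\LL\mathbf P=0$ to close the base case $|\beta|=0$, and the inductive hypothesis to swallow the lower-order remainders—yields \eqref{linear-soft}, with the constant $C_{|\beta|}$ accumulating along the induction.

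For the nonlinear estimate I first distribute $\pab$ across the bilinear $\Gamma$: since $\Gamma$ is translation invariant in $x$ and the velocity derivatives satisfy a Leibniz rule in which $\p^\beta$ may strike the kernel $|u-v|^\gamma B(\theta)$ and the $\sqrt\mu$ factors, one writes $\pab\Gamma(f,g)=\sum\Gamma_{\beta_0}(\p^{\beta_1}_{\alpha_1}f,\p^{\beta_2}_{\alpha_2}g)$ for modified operators $\Gamma_{\beta_0}$ obeying the same weighted bounds as $\Gamma$. The core is then a pointwise-in-$x$ weighted trilinear estimate of the form $\langle w^{2s}\Gamma(F,G),H\rangle\lesssim\{|w^{s}F|_{L^2_v}\,|w^{s}G|_\nu+|w^{s}G|_{L^2_v}\,|w^{s}F|_\nu\}\,|w^{s}H|_\nu$ (with the reduced exponents $\{l-|\beta_i|\}|\gamma|$ on each differentiated factor), proved by splitting into \emph{gain} and \emph{loss} terms, applying the change of variables $(u,v)\mapsto(u',v')$, and using Cauchy--Schwarz against the collision measure $|u-v|^\gamma B(\theta)\,\dd u\,\dd\omega$ with the soft-potential kernel estimates. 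Integrating in $x$, placing the factor with $|\alpha_i|+|\beta_i|\leq[\tfrac N2]+4$ in $L^\infty_x$ via the Sobolev embedding $H^2(\TT)\hookrightarrow L^\infty$ and the other in $L^2_x$, produces \eqref{nonlinear-soft}.

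The main obstacle throughout is the weighted treatment of the singular soft interaction $|u-v|^\gamma$ with $-3<\gamma<0$. In the linear part this is the need for sharp pointwise control of the conjugated kernel $w^{s}(v)\,k(v,v')\,w^{-s}(v')$ and its velocity derivatives, so that the small-tail property forces the $K$-contribution into the dissipation; in the nonlinear part it is the near-diagonal singularity of the gain term, which must be shown integrable uniformly after conjugation by the velocity weights. The delicate bookkeeping is to verify that each velocity derivative degrades the weight by exactly $|\gamma|$, so that all inequalities close with the precise exponents $\{l-|\beta|\}|\gamma|$ asserted in the statement.
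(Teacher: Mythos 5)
Your outline reconstructs essentially the proof the paper relies on: the paper itself gives no argument for this lemma --- Section~3 defers entirely to Guo \cite{G} and the soft-potential references therein --- and that cited proof proceeds exactly as you do, via the decomposition $\LL=\nu-K$ with integrability and small-tail bounds on the conjugated kernel $w^{s}k(v,v')w^{-s}$ plus downward induction on $|\beta|$ for \eqref{linear-soft}, and via Leibniz distribution of $\pab$, gain/loss splitting with the pre/post-collisional change of variables, and the $L^\infty_x$--$L^2_x$ splitting through $H^{2}(\TT)\hookrightarrow L^{\infty}(\TT)$ (which is where the threshold $[\tfrac{N}{2}]+4$ enters) for \eqref{nonlinear-soft}. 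One detail to repair when writing it out in full: $\p^{\beta}$ should never strike the kernel $|u-v|^{\gamma}$ directly, since for $-3<\gamma\leq-2$ this produces the non-locally-integrable factor $|u-v|^{\gamma-1}$; the standard device in the cited proof is to substitute $u\mapsto v+\xi$ so that the kernel $|\xi|^{\gamma}$ is $v$-independent and every velocity derivative falls on the $\sqrt{\mu}$ factors (harmless, by Gaussian decay) or on the arguments of $f$ and $g$.
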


\begin{lem}\label{Landau-Est}
For the Landau kernel, for any $l \geq 0$, there exist $C_{|\beta|},
C >0$, such that
\begin{equation}\label{linear-landau}
(w^{2l-2|\beta|}\pab \LL f\,,\pab f) \geq
\frac{1}{2}\|w^{l-|\beta|}\pab f\|^2_\sigma
-C_{|\beta|}\|f\|^2_\sigma\,,
\end{equation}
\begin{equation}\label{nonlinear-landau}
\begin{split}
(w^{2l-2|\beta|}\pab\Gamma(f,g)\,,\pab h) &\leq C\{\|w^{l-|\beta_1|}\p^{\beta_1}_{\alpha_1} f\|\cdot\|w^{l-|\beta_2|}\p^{\beta_2}_{\alpha_2} g\|_\sigma\\
&+\|w^{l-|\beta_1|}\p^{\beta_1}_{\alpha_1} g\|\cdot\|w^{l-|\beta_2|}\p^{\beta_2}_{\alpha_2} f\|_\sigma\}\\
&\times \|w^{l-|\beta|}\pab h\|_\sigma\,,
\end{split}
\end{equation}
where the summation is taken over $|\alpha|+|\beta|\leq N$, and
$\beta_1+\beta_2 \leq \beta$ and $\alpha_2 \leq \alpha$
componentwise.
\end{lem}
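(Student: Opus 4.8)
The plan is to treat the two inequalities \eqref{linear-landau} and \eqref{nonlinear-landau} separately, in each case reducing everything to sharp pointwise information about the Landau diffusion matrix $\sigma_{ij}(v)$. The analytic backbone is the following: diagonalizing $\sigma_{ij}$ in the orthonormal frame consisting of $v/|v|$ together with two vectors spanning its orthogonal complement, one finds a simple eigenvalue $\lambda_\parallel(v)$ in the direction parallel to $v$ and a double eigenvalue $\lambda_\perp(v)$ in the perpendicular plane, with $\lambda_\parallel(v)\sim\langle v\rangle^{-3}$ and $\lambda_\perp(v)\sim\langle v\rangle^{-1}$; the analogous bounds, gaining one extra power of $\langle v\rangle^{-1}$ per derivative, hold for $\p^k\sigma_{ij}$ and $\p^k\p^m\sigma_{ij}$. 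These follow by differentiating under the integral sign in the definition of $\sigma_{ij}$ and estimating the resulting convolutions against $\mu$, and they encode the fact that the $\sigma$-norm is a genuinely \emph{anisotropic} weighted $H^1_v$-norm.

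For the coercivity estimate \eqref{linear-landau} I would first record the underived, unweighted inequality $\langle\LL f,f\rangle\geq\delta|(\ip)f|^2_\sigma$, expressing that the leading diffusive part of $\LL$ reproduces the $\sigma$-norm while the remainder is a $\sigma$-compact, lower-order operator. To reach the weighted derivative form $(w^{2l-2|\beta|}\pab\LL f,\pab f)$, I would apply $\pab$ to $\LL f$ by the Leibniz rule and isolate the top term in which every derivative lands on $f$; integrating the outer velocity derivative $\p^i$ appearing in the definition of $\Q$ by parts against $w^{2l-2|\beta|}\pab f$, this term yields precisely $\tfrac12\|w^{l-|\beta|}\pab f\|^2_\sigma$ in view of the diagonalization recorded above. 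Every remaining term carries at least one derivative on the weight $w^{2l-2|\beta|}$, on the Maxwellian factors, or on the matrix $\sigma_{ij}$; each such hit either produces an honest gain in velocity decay or is compensated by the \emph{decreasing} exponent $l-|\beta|$, so all of them can be Cauchy--Schwarzed in the $\sigma$-inner product and absorbed into $\tfrac12\|w^{l-|\beta|}\pab f\|^2_\sigma+C_{|\beta|}\|f\|^2_\sigma$.

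The nonlinear bound \eqref{nonlinear-landau} I would obtain by the same philosophy applied to $\Gamma(f,g)=\mu^{-1/2}\Q(\sqrt\mu f,\sqrt\mu g)$. Expanding $\pab\Gamma(f,g)$ by Leibniz distributes the derivatives among $f$, $g$, the two $\sqrt\mu$ factors and the kernel $\phi_{ij}$; the spatial derivatives act only on $f,g$ so that $\alpha_1+\alpha_2=\alpha$, while velocity derivatives falling on the Maxwellian and kernel are absorbed into a new Maxwellian weight, leaving terms of Landau type in $\p^{\beta_1}_{\alpha_1}f$ and $\p^{\beta_2}_{\alpha_2}g$ with $\beta_1+\beta_2\le\beta$. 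Testing against $w^{2l-2|\beta|}\pab h$, integrating the outer $\p^i$ by parts, and applying the Cauchy--Schwarz inequality adapted to $\sigma_{ij}$ with the weight split as $w^{2l-2|\beta|}=w^{l-|\beta|}\cdot w^{l-|\beta|}$, I would place the lowest-order factor in an ordinary weighted $L^2$-norm---controlled in $L^\infty_x$ through Sobolev embedding since $|\alpha_1|+|\beta_1|\le N$---and the other two factors in the $\sigma$-norm, which reproduces exactly the right-hand side of \eqref{nonlinear-landau}.

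The main obstacle, and the genuine departure from the Boltzmann cutoff estimates of Lemmas \ref{Hard-Est}--\ref{Soft-Est}, is precisely the anisotropic degeneracy of $\sigma_{ij}$: since the diffusion is of order $\langle v\rangle^{-1}$ transverse to $v$ but only of order $\langle v\rangle^{-3}$ along $v$, one cannot bound $\sigma_{ij}\xi_i\xi_j$ below by a single scalar multiple of $\langle v\rangle^{-1}|\xi|^2$, so the scalar bookkeeping that suffices for the collision frequency $\nu(v)$ breaks down. The care therefore goes into decomposing every velocity gradient into its components parallel and orthogonal to $v$, tracking the two decay rates separately through both the integration by parts in \eqref{linear-landau} and the Leibniz expansion in \eqref{nonlinear-landau}, and checking that the weight exponent $l-|\beta|$ is chosen so that even the weakly-damped parallel direction remains controlled. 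Once these pointwise anisotropic estimates are secured, both inequalities follow from the weight-tracking and absorption scheme sketched above.
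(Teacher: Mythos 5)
The paper itself contains no proof of this lemma: Section 3 explicitly defers all of Lemmas 3.1--3.4 to Guo \cite{G} (whose Landau estimates go back to his earlier work on the Landau equation in a periodic box), and your sketch is a faithful reconstruction of precisely that cited argument --- the anisotropic spectral bounds $\lambda_\parallel(v)\sim (1+|v|)^{-3}$, $\lambda_\perp(v)\sim (1+|v|)^{-1}$ for $\sigma_{ij}$ with the extra decay per derivative, the unweighted coercivity $\langle\LL f,f\rangle\geq\delta|(\ip)f|^2_\sigma$, the weighted Leibniz expansion in which derivative hits on $\sigma_{ij}$ gain decay that exactly offsets the weight increase from $l-|\beta|$ to $l-|\beta'|$, and the case-split Sobolev embedding $H^2(\TT)\hookrightarrow L^\infty(\TT)$ in the trilinear bound are exactly the ingredients there. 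The only place your outline is materially thinner than the source is the absorption step in \eqref{linear-landau}: the intermediate terms $\partial^{\beta'}_{\alpha}f$ with $0<|\beta'|<|\beta|$ cannot be ``Cauchy--Schwarzed away'' directly but are handled by an induction on $|\beta|$ combined with a compact interpolation in $v$ (which is why the remainder is the derivative-free $C_{|\beta|}\|f\|^2_\sigma$) --- a standard completion of your scheme rather than a wrong turn, so I regard your proposal as correct and essentially the same approach as the proof the paper relies on.
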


As a direct consequence of \eqref{nonlinear} in the above lemmas, we
can estimate the pure spatial derivatives for the nonlinear
collision operator $\Gamma$.

\begin{lem}
Let $\zeta(v)$ be a smooth function that decays exponentially, then
there is a given instant energy functional $\mathcal{E}_{N,0}(f)$
and $C_\zeta>0$, such that for summation over
$\alpha_1+\alpha_2=\alpha$, $|\alpha|\leq N$,
\begin{equation}\label{nonlinear2}
\begin{split}
(\p_\alpha\Gamma(f\,,g)\,,\p_\alpha h)&\leq\{\mathcal{E}^{1/2}_{N,0}(f)\|\p_{\alpha_2}g\|_\nu+\mathcal{E}^{1/2}_{N,0}(g)\|\p_{\alpha_2}f\|_\nu\}\|\p_{\alpha_3}h\|_\nu\,,\\
\left\|\int\p_\alpha\Gamma(f\,,g)\zeta\,dv\right\|&\leq
C_\zeta\{\mathcal{E}^{1/2}_{N,0}(f)\cdot\|\p_{\alpha_2}g\|_\nu+\mathcal{E}^{1/2}_{N,0}(g)\cdot\|\p_{\alpha_2}f\|_\nu\}\,.
\end{split}
\end{equation}
\end{lem}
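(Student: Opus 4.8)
The plan is to read both inequalities off from the weighted nonlinear bounds \eqref{nonlinear} (and, for the other kernels, \eqref{nonlinear-soft} and \eqref{nonlinear-landau}) specialized to the case $\beta=0$, $l=0$. The reason this specialization is harmless is structural: once only pure spatial derivatives act, the velocity weight $w^l$ collapses to $1$, and every factor that appears carries at most $|\alpha|\le N$ spatial derivatives. Consequently all such factors are dominated by the weightless instant energy $\mathcal{E}_{N,0}$, since the defining equivalence \eqref{E} (respectively \eqref{E2}, \eqref{E3}) gives $\|\p_{\alpha_1}f\|^2\le C\,\mathcal{E}_{N,0}(f)$ for every $|\alpha_1|\le N$ through its pure-spatial sum $\sum_{|\alpha|\le N+1}\|\p_\alpha f\|^2$.

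For the first inequality I would begin from the Leibniz expansion $\p_\alpha\Gamma(f,g)=\sum_{\alpha_1+\alpha_2=\alpha}\binom{\alpha}{\alpha_1}\Gamma(\p_{\alpha_1}f,\p_{\alpha_2}g)$, legitimate because $\Gamma$ is bilinear with an $x$-independent kernel. Applying \eqref{nonlinear} with $\beta=0$, $l=0$ produces, for each split, terms of the form $\|\p_{\alpha_1}f\|\,\|\p_{\alpha_2}g\|_\nu\,\|\p_\alpha h\|_\nu$ and the symmetric one with $f$ and $g$ interchanged. I then bound the lower-order factor directly by $\|\p_{\alpha_1}f\|\le\mathcal{E}^{1/2}_{N,0}(f)$ via \eqref{E}, absorbing the finite sum over $\alpha_1$ into the single quantity $\mathcal{E}^{1/2}_{N,0}$, while keeping the remaining factor in the dissipation norm. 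The two orderings furnish exactly $\mathcal{E}^{1/2}_{N,0}(f)\|\p_{\alpha_2}g\|_\nu$ and $\mathcal{E}^{1/2}_{N,0}(g)\|\p_{\alpha_2}f\|_\nu$, with the full derivative $\p_\alpha h$ (that is, $\alpha_3=\alpha$) retained on the test function.

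The second inequality cannot be obtained from \eqref{nonlinear} by duality, since that estimate demands derivatives on the test function and the natural candidate $h(x,v)=\psi(x)\zeta(v)$ would then require control of $\|\p_\alpha\psi\|$ rather than just $\|\psi\|_{L^2_x}$. Instead I would argue pointwise in $x$: setting $G(x)\equiv\int\p_\alpha\Gamma(f,g)\,\zeta\,\dd v$ and expanding by Leibniz, I estimate each summand through the velocity pairing $|\langle\Gamma(\p_{\alpha_1}f,\p_{\alpha_2}g),\zeta\rangle|\le C_\zeta\,|\p_{\alpha_1}f|_2\,|\p_{\alpha_2}g|_\nu$, where the exponential decay of $\zeta$ lets the weight $|\zeta|_\nu$ (and the corresponding $\sigma$-weights in the Landau case) be swallowed into $C_\zeta$. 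Taking the $L^2(\TT)$ norm of $G$ and using Hölder in $x$, I place the low-order factor in $L^\infty_x L^2_v$ and control it by $\mathcal{E}^{1/2}_{N,0}$ through the Sobolev embedding $H^2(\TT)\hookrightarrow L^\infty(\TT)$, which yields the stated bound.

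The main obstacle is bookkeeping rather than conceptual. One must verify that in every split $\alpha_1+\alpha_2=\alpha$ the factor sent to $L^\infty_x$ carries at most $N/2$ derivatives, so that the two extra derivatives spent on $H^2\hookrightarrow L^\infty$ keep the total at most $N$ and the factor is genuinely dominated by $\mathcal{E}_{N,0}$; the hypothesis $N\ge8$ leaves ample room. The only other point requiring attention is the uniform treatment of the three kernel classes: for the soft-potential and Landau cases the weights $w^{\{l-|\beta|\}|\gamma|}$ and $w^{l-|\beta|}$ again reduce to $1$ when $\beta=0$, $l=0$, so \eqref{nonlinear-soft} and \eqref{nonlinear-landau} deliver the same product structure, with $\|\cdot\|_\nu$ replaced by $\|\cdot\|_\sigma$ under the unified dissipation notation; the rest of the argument is identical.
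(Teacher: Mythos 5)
Your proposal is correct and follows the route the paper intends: the paper offers no proof of this lemma beyond declaring it ``a direct consequence of \eqref{nonlinear}'' and its soft/Landau analogues, and your specialization to $\beta=0$, $l=0$ together with the bound $\|\p_{\alpha_1}f\|\le C\,\mathcal{E}^{1/2}_{N,0}(f)$ from the definition of the instant energy is exactly that consequence. Your separate pointwise-in-$x$ argument for the second inequality (pairing against $\zeta$ in $v$, then H\"older in $x$ with the low-order factor placed in $L^\infty_x L^2_v$ via $H^2(\TT)\hookrightarrow L^\infty(\TT)$, using $N\ge 8$ for the derivative count) correctly supplies the one step that \eqref{nonlinear} does not literally cover, and is the standard argument from \cite{G} on which the paper implicitly relies.
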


\section{Derivation of Acoustic System}

In this section, we derive the acoustic system as the hydrodynamic
limit of solutions $f_\eps $ to the rescaled Boltzmann equation
\eqref{P_B}. Since we have the uniform energy bound in $\varepsilon$
 by Theorem \ref{Acoustic},
there exists the unique limit $f$ of $f_\eps $ in $\varepsilon$ and
we remark that due to higher order energy bound, all the limits in
the below are strongly convergent. First, by letting
$\varepsilon\rightarrow 0$ in \eqref{P_B}, one finds that
$\mathcal{L}f=0$. Thus $f$ can be written as follows:
\[
f = \{\rho+v\cdot
u+\left(\tfrac{|v|^2}{2}-\tfrac{3}{2}\right)\theta\}\sqrt{\mu},
\]
for $\rho,\,u,\,\theta$ are functions of $t,x$. In order to
determine the dynamics of $\rho,\,u,\,\theta$, project \eqref{P_B}
onto
$\{\sqrt{\mu},v\sqrt{\mu},(\tfrac{|v|^2}{2}-\tfrac{3}{2})\sqrt{\mu}\}$:
by collision invariants, first we get
\[
\langle \partial_t f_\eps +v\cdot \nabla_x f_\eps
,\;\{1,v,(\tfrac{|v|^2}{3}-1)\}\sqrt{\mu}\rangle=0
\]
and take the limit $\varepsilon\rightarrow 0$ to get
\[
\langle \partial_t f+v\cdot \nabla_x
f,\;\{1,v,(\tfrac{|v|^2}{3}-1)\}\sqrt{\mu}\rangle=0
\]
Since $f=\mathbf{P}f$, this is equivalent to
\begin{equation}\label{limit}
\begin{split}
&\partial_t\rho+\DIV u=0\\
&\partial_tu+\grad(\rho+\theta)=0\\
&\partial_t\theta+\tfrac{2}{3}\DIV u=0
\end{split}
\end{equation}
Thus we have shown the following proposition on the mathematical
derivation of the acoustic system from the Boltzmann equation.

\begin{prop}\label{convergence}
Assume that $F_\eps =\mu+\varepsilon\sqrt{\mu}f_\eps $
 solves the rescaled Boltzmann equation \eqref{R_B} where $f_\eps $
is obtained from Theorem \ref{Acoustic}. Then there exists the
hydrodynamic limit $f$ of $f_\eps $ such that $f=\mathbf{P}f$, and
furthermore its macroscopic variables $\rho,\,u,\,\theta$ solve the
acoustic system \eqref{limit}.
\end{prop}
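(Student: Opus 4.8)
The plan is to use the uniform energy bound of Theorem~\ref{Acoustic} to extract a limit $f$ of $\fe$, to show that $f$ lies in the null space of $\LL$, and then to pass to the limit in the \emph{local} conservation laws satisfied by the classical solutions $\fe$. First I would record that \eqref{enbound} bounds $\E_{N,l}(f^\eps)(t)$ uniformly in both $t$ and $\eps$; since $N\geq 8$, \eqref{E} controls up to $N+1$ spatial derivatives of $\fe$, so $\fe$ is uniformly bounded in $L^\infty_t H^{N+1}_x L^2_v$. A weak-$*$ limit $f$ therefore exists along a subsequence, and the role of the high-order bound is to upgrade this convergence to be strong once the time-compactness discussed below is in hand.

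Next I would identify the limit. Integrating the energy inequality \eqref{en} in time gives $\int_0^\infty \D_{N,l}(f^\eps)(t)\,dt \leq \E_{N,l}(f^\eps)(0)$, and the structure of \eqref{D} (resp. \eqref{D2}, \eqref{D3}) shows that the microscopic part enters the dissipation with the large factor $\tfrac1\eps$. In particular $\tfrac1\eps\int_0^\infty\|(\ip)\fe\|_\nu^2\,dt$ is bounded uniformly, so $\int_0^\infty\|(\ip)\fe\|_\nu^2\,dt = O(\eps)\to 0$. Hence the limit obeys $(\ip)f = 0$, i.e. $\LL f=0$, and since the null space of $\LL$ is spanned by $[\sqrt\mu, v\sqrt\mu, |v|^2\sqrt\mu]$ we may write $f = \mathbf{P}f = \{\rho + v\cdot u + (\tfrac{|v|^2}{2}-\tfrac{3}{2})\theta\}\sqrt\mu$ for functions $\rho, u, \theta$ of $(t,x)$.

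To pin down the dynamics I would exploit that, being classical solutions, the $\fe$ satisfy the local conservation laws exactly. Testing \eqref{P_B} against the collision invariants $\psi\sqrt\mu$ with $\psi\in\{1, v, \tfrac{|v|^2}{3}-1\}$, the stiff term drops out because $\langle\LL\fe, \psi\sqrt\mu\rangle = 0$ (these span the null space of the self-adjoint $\LL$), and the nonlinearity drops out because $\langle\Gamma(\fe,\fe),\psi\sqrt\mu\rangle=0$ by conservation of mass, momentum and energy for $\Q$. This leaves the system $\langle\partial_t\fe + \vgrad\fe, \psi\sqrt\mu\rangle=0$, which is linear in $\fe$. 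I would then pass to the limit, substitute $f=\mathbf{P}f$, and evaluate the Gaussian moments ($\int\mu\,dv=1$, $\int v_iv_j\mu\,dv=\delta_{ij}$, $\int|v|^2\mu\,dv=3$, all odd moments vanishing); the three resulting identities collapse to exactly \eqref{limit}.

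The main obstacle is not the moment algebra but securing enough compactness to pass to the limit inside $\langle\partial_t\fe,\psi\sqrt\mu\rangle$. The saving observation is that, after projecting onto $\psi\sqrt\mu$, the stiff $\tfrac1\eps\LL$ term is annihilated, so $\partial_t\langle\fe,\psi\sqrt\mu\rangle = -\langle\vgrad\fe,\psi\sqrt\mu\rangle$ is uniformly bounded in $H^{N}_x$. Thus the macroscopic moments are bounded in $L^\infty_t H^{N+1}_x$ with time derivatives bounded in $L^\infty_t H^{N}_x$, so an Aubin--Lions argument yields their strong convergence; combined with $(\ip)\fe\to 0$ this gives $\fe\to f$ strongly and legitimizes every limit above. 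That the local conservation laws hold exactly, rather than only approximately as for renormalized solutions, is precisely the feature of the classical-solution framework that makes this passage clean.
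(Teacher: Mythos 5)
Your proposal is correct and follows essentially the same route as the paper: extract the limit from the uniform energy bound of Theorem~\ref{Acoustic}, identify $\LL f=0$ so that $f=\mathbf{P}f$, and pass to the limit in the local conservation laws obtained by projecting \eqref{P_B} onto the collision invariants, for which the stiff and nonlinear terms vanish. The only (harmless) differences are that you deduce $(\ip)f=0$ from the time-integrated dissipation bound with its $\tfrac{1}{\eps}$ weight rather than by multiplying the equation by $\eps$ and letting $\eps\to 0$, and that you spell out the Aubin--Lions compactness which the paper leaves implicit in its remark that the higher-order bounds make all the limits strong.
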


The acoustic system is a linear system and it is globally well-posed
in the Sobolev space.

\begin{lem}\label{regularity} The acoustic system \eqref{limit} is  globally well-posed in  $H^s(\TT)$ space, for any $s\geq
0$. Moreover, we obtain the following estimates:
\begin{equation}\label{conser}
\frac{d}{dt}\{||\rho_1||^2_{H^s}+||u_1||^2_{H^s}+\tfrac{3}{2}||\theta_1||^2_{H^s}
\}=0
\end{equation}
\end{lem}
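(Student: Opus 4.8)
The plan is to treat \eqref{limit} as a linear, constant-coefficient, symmetric hyperbolic system and to run a weighted $L^2$ energy argument, from which both the conservation identity \eqref{conser} and uniqueness follow at once; global existence is then obtained by an explicit Fourier construction on $\TT$. The whole argument is elementary linear theory, so I will only indicate the structure and suppress routine estimates.

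First I would establish the $s=0$ version of \eqref{conser}. Pairing the three equations of \eqref{limit} in $L^2(\TT)$ with $\rho$, $u$, and $\tfrac32\theta$ respectively, and integrating by parts on the torus (there are no boundary contributions), I obtain
\begin{equation*}
\tfrac12\tfrac{d}{dt}\|\rho\|^2=-(\DIV u,\rho),\qquad
\tfrac12\tfrac{d}{dt}\|u\|^2=(\rho+\theta,\DIV u),\qquad
\tfrac12\tfrac{d}{dt}\|\theta\|^2=-\tfrac23(\DIV u,\theta).
\end{equation*}
Adding the first, the second, and $\tfrac32$ times the third, and using the symmetry $(\DIV u,\rho)=(\rho,\DIV u)$ of the real inner product, the combination $-(\DIV u,\rho)+(\rho,\DIV u)$ cancels and so does $(\theta,\DIV u)-(\DIV u,\theta)$, leaving $\tfrac{d}{dt}\{\|\rho\|^2+\|u\|^2+\tfrac32\|\theta\|^2\}=0$. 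The weight $\tfrac32$ on $\theta$, together with the equal weights on $\rho$ and $u$, is exactly what symmetrizes the system; identifying it is the only non-mechanical point of the whole lemma. For general $s$ I would apply $\p_\alpha$ to \eqref{limit}: since the coefficients are constant, $(\p_\alpha\rho,\p_\alpha u,\p_\alpha\theta)$ solves the very same system, and summing the $s=0$ identity over $|\alpha|\le s$ (or, for noninteger $s$, applying the Fourier multiplier $(1+|k|^2)^{s/2}$ mode by mode) yields \eqref{conser}.

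For existence, uniqueness, and continuous dependence I would pass to Fourier series on $\TT$. The $k$-th mode obeys the linear ODE $\tfrac{d}{dt}\hat\rho_k=-ik\!\cdot\!\hat u_k$, $\tfrac{d}{dt}\hat u_k=-ik(\hat\rho_k+\hat\theta_k)$, $\tfrac{d}{dt}\hat\theta_k=-\tfrac23\,ik\!\cdot\!\hat u_k$, whose generator is skew-Hermitian with respect to the weighted inner product with weights $(1,1,\tfrac32)$; the computation above shows precisely that $|\hat\rho_k|^2+|\hat u_k|^2+\tfrac32|\hat\theta_k|^2$ is conserved. Hence each mode is governed by a one-parameter unitary group, is defined for all $t\in\Real$, and preserves its weighted modulus. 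Reassembling the modes and using the modewise conservation, the quantity $\sum_k(1+|k|^2)^s(|\hat\rho_k|^2+|\hat u_k|^2+\tfrac32|\hat\theta_k|^2)$ is finite and constant, so the series converges in $H^s(\TT)$ for all time and defines a solution whose $H^s$ norm is controlled by the data; continuous dependence is immediate from the same bound. Uniqueness then follows by applying \eqref{conser} to the difference of two solutions issuing from identical data: its weighted $H^s$ energy is constant and initially zero, hence zero for all $t$. Equivalently, one may invoke Stone's theorem directly, the operator on the right of \eqref{limit} being skew-adjoint on the weighted $L^2$ and thus generating a unitary group on each $H^s$. There is no substantial obstacle here beyond fixing the symmetrizing weights; once these are in hand, everything reduces to standard linear well-posedness.
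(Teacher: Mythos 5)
Your argument is correct and follows essentially the same route as the paper's own (very brief) proof: Fourier series in $x$ reducing to mode-by-mode ODEs for existence, the weighted $L^2$ energy identity with weights $(1,1,\tfrac{3}{2})$ for the conservation law \eqref{conser}, and uniqueness deduced from that identity. You simply supply the routine details that the paper leaves implicit, including the verification that $\tfrac{3}{2}$ is the correct symmetrizing weight.
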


\begin{proof} The existence of solutions can be verified, for
instance by solving the ordinary differential equation after taking
Fourier transform in $x\in \TT$. The energy estimates give rise to
the conservation of energy \eqref{conser}. The uniqueness is easily
deduced.
\end{proof}

\section{uniform spatial energy estimates}

In this section, we shall establish a uniform spatial energy
estimate for $\fe$,
 a solution to \eqref{P_B}:
\[
 \partial_t f_\eps  +v\cdot\nabla_xf_\eps
+\frac{1}{\varepsilon} \mathcal{L}f_\eps =\Gamma(f_\eps , f_\eps )
\]
 For the convenience, we
rewrite the fluid part $\mathbf{P}f_\eps $ as follows:
\[
\mathbf{P}f_\eps =\{a^\varepsilon(t,x)+b^\varepsilon (t,x)\cdot
v+c^\varepsilon(t,x)|v|^2\}\sqrt{\mu}
\]
Our goal is to estimate $a^\varepsilon(t,x),\;b^\varepsilon
(t,x),\;\text{and }c^\varepsilon(t,x)$ in terms of
$(\mathbf{I-P})f_\eps $.

\begin{lem}\label{positivity} Assume $f_\eps $ is a solution
to \eqref{P_B} satisfying conservation of mass, momentum and energy:
\begin{equation}\label{conservation}
(f^{\varepsilon}(t),[1,v,|v|^2]\sqrt{\mu})=0.
\end{equation}
Then there exists $C_1>0$ such that
\begin{align}\label{PE}
\varepsilon\sum_{|\alpha|\leq N+1}\|\partial_\alpha\mathbf{P}f_\eps
\|^2 \leq \varepsilon\frac{dG(t)}{dt}+\frac{C_1}{\varepsilon}
\sum_{|\alpha|\leq N+1}\|\partial_\alpha(\mathbf{I-P})f_\eps
\|_\nu^2+ C_1\varepsilon \sum_{|\alpha|\leq
N}\|\partial_\alpha\Gamma(f_\eps , f_\eps )_{\|}\|^2
\end{align}
where $G(t)$ is defined as
\begin{equation}
\begin{split}
-&\sum_{|\alpha|\leq N}\int_{\TT}\left(\langle(\ip)\p_\alpha f_\eps
\,,\zeta_{ij}\rangle\cdot\p_j\p_\alpha b^\eps
-\langle(\ip)\p_\alpha f_\eps \,,\zeta_c\rangle\cdot\nabla_x\p_\alpha c^\eps\right)\,dx\\
&-\sum_{|\alpha|\leq N}\int_{\TT}\left(\langle(\ip)\p_\alpha f_\eps
\,,\zeta\rangle\cdot\nabla_x\p_\alpha a^\eps\,-\p_\alpha
b^\eps\cdot\nabla_x\p_\alpha a^\eps\right)\,dx\,.
\end{split}
\end{equation}
Here $\zeta_{ij}(v)\,,\zeta_c(v)\,,\zeta_a(v)$ are some fixed linear
combinations of the basis
\[
[\sqrt{\mu}, v_i\sqrt{\mu}, v_iv_j\sqrt{\mu},v_i|v|^2\sqrt{\mu}]
\]
for $1\leq i\,,j\leq 3$, and $f_{\|}$ is the $L^2_v$ projection of
$f$ onto the subspace generated by the same basis. It is obvious
that $|G(t)|\leq C \mathcal{E}_{N,0}(t) $.
\end{lem}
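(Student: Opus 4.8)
The plan is to extract a closed system of \emph{macroscopic equations} for the fields $a^\eps, b^\eps, c^\eps$ and then bound their spatial gradients by the microscopic part, the projected nonlinearity, and a time derivative of the interaction functional $G(t)$. First I would insert $\fe = \pf + \ipf$ into \eqref{P_B} and use $\LL\pf = 0$ to isolate
\[
\partial_t\pf + \vgrad\pf = -\Big\{\partial_t + \vgrad + \tfrac{1}{\eps}\LL\Big\}\ipf + \Gamma(\fe,\fe) =: \ell_\eps + h_\eps .
\]
Writing $\pf = \{a^\eps + b^\eps\cdot v + c^\eps|v|^2\}\sqrt{\mu}$ and expanding the left-hand side as a polynomial in $v$ times $\sqrt{\mu}$, I match the coefficients of $\sqrt{\mu}$, $v_i\sqrt{\mu}$, $v_iv_j\sqrt{\mu}$ and $v_i|v|^2\sqrt{\mu}$ by pairing in $v$ against the fixed test functions $\zeta$, $\zeta_{ij}$, $\zeta_c$. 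Besides the local conservation laws (which come from pairing against the collision invariants and express $\partial_t a^\eps$, $\partial_t b^\eps$, $\partial_t c^\eps$ through spatial derivatives and microscopic fluxes), this yields the auxiliary relations, schematically
\[
\grad c^\eps = \langle \ell_\eps + h_\eps , \zeta_c\rangle , \qquad \partial_i b_j^\eps + \partial_j b_i^\eps = \langle \ell_\eps + h_\eps , \zeta_{ij}\rangle , \qquad \grad a^\eps + \partial_t b^\eps = \langle \ell_\eps + h_\eps , \zeta\rangle ,
\]
in which every moment of $h_\eps = \Gamma(\fe,\fe)$ only sees the projection $\Gamma(\fe,\fe)_{\|}$ onto the basis, and the streaming part of $\ell_\eps$ raises the spatial order of $\ipf$ by one.

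Next I would apply $\pa$ with $|\alpha|\leq N$ to these relations and estimate $\grad\pa c^\eps$, $\grad\pa b^\eps$, $\grad\pa a^\eps$ in that chained order, since the diagonal $v_i^2$-moments couple $\partial_i b_i^\eps$ to $\partial_t a^\eps, \partial_t c^\eps$ and the $a$-relation carries $\partial_t b^\eps$. Testing the $c$-relation against $\grad\pa c^\eps$ controls $\|\grad\pa c^\eps\|^2$ by the $L^2$-moments of $\ell_\eps + h_\eps$; pairing the deformation relation with $\partial_i\pa b_j^\eps$ and integrating by parts in $x$ produces $\|\grad\pa b^\eps\|^2 + \|\DIV\pa b^\eps\|^2$ on the left, so the symmetric combination controls $\grad\pa b^\eps$; and the $a$-relation then controls $\grad\pa a^\eps$. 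The crucial structural point is the treatment of the $\partial_t$-terms carried by $\ell_\eps$: the contribution of $\langle\partial_t\ipf,\zeta\rangle$ paired with a macroscopic gradient is integrated by parts in time,
\[
\int_{\TT}\langle\partial_t(\ip)\pa\fe ,\zeta\rangle\cdot\grad\pa a^\eps\,dx = \frac{d}{dt}\int_{\TT}\langle(\ip)\pa\fe ,\zeta\rangle\cdot\grad\pa a^\eps\,dx - \int_{\TT}\langle(\ip)\pa\fe ,\zeta\rangle\cdot\grad\pa\partial_t a^\eps\,dx ,
\]
and likewise for the $\zeta_{ij}$ and $\zeta_c$ moments. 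The boundary-free first terms assemble exactly into $\tfrac{d}{dt}G(t)$, while in the remaining integrals $\partial_t a^\eps$ (respectively $\partial_t b^\eps, \partial_t c^\eps$) is re-expressed through the local conservation laws as spatial derivatives plus microscopic moments; the purely macroscopic monomial $\pa b^\eps\cdot\grad\pa a^\eps$ appearing in $G(t)$ arises in the same way from the $\partial_t b^\eps$ carried by the $a$-relation.

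Finally I would collect the estimates. Since the microscopic terms enter through $\tfrac1\eps\LL\ipf$, each such moment contributes $\tfrac1\eps\|(\ip)\pa\fe\|_\nu$; squaring and summing, and using that $\vgrad\ipf$ forces one extra spatial derivative, gives
\[
\sum_{|\alpha|\leq N}\|\grad\pa\pf\|^2 \lsm \frac{dG}{dt} + \frac{1}{\eps^2}\sum_{|\alpha|\leq N+1}\|\pa\ipf\|_\nu^2 + \sum_{|\alpha|\leq N}\|\pa\Gamma(\fe,\fe)_{\|}\|^2 + \delta\sum_{|\alpha|\leq N}\|\grad\pa\pf\|^2 ,
\]
where the last term, coming from the re-expressed $\partial_t$-macroscopic factors, carries a small constant $\delta$ and is absorbed on the left. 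Because the conservation laws \eqref{conservation} make $a^\eps, b^\eps, c^\eps$ mean-zero on $\TT$, the Poincaré inequality upgrades the gradient bound to the full $\sum_{|\alpha|\leq N+1}\|\pa\pf\|^2$; multiplying through by $\eps$ then yields \eqref{PE} with $G(t)$ as stated. I expect the main obstacle to be the careful bookkeeping of the time-derivative terms: one must check that every $\partial_t$ produced by the chained elliptic estimates is either packaged into $\tfrac{d}{dt}G(t)$ or, after using the conservation laws, carries a genuinely small constant so that it can be moved to the left-hand side, rather than leaving an uncontrolled $\partial_t\pf$ behind.
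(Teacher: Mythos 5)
Your proposal follows essentially the same route as the paper's proof (which itself adapts Lemma~6.1 of Guo \cite{G}): the macroscopic equations obtained by matching coefficients against the basis $[\sqrt{\mu}, v_i\sqrt{\mu}, v_iv_j\sqrt{\mu}, v_i|v|^2\sqrt{\mu}]$, the chained elliptic estimates for $\nabla\partial_\alpha a^\eps, b^\eps, c^\eps$, the integration by parts in time packaging the $\partial_t\ipf$-moments into $\tfrac{d}{dt}G(t)$, the Poincar\'e inequality via the mean-zero conservation laws, and the final multiplication by $\varepsilon$ with absorption of the small hydrodynamic terms. The argument is correct and matches the paper's, including the key observation that the acoustic scaling leaves no $\varepsilon$ on the $t$-derivative terms so that the $\varepsilon$-weights in \eqref{PE} come out as stated.
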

\begin{proof}
The proof is similar to the one of Lemma 6.1 in \cite{G}. For the
clear presentation of this article, we provide the key ingredients
and estimates and point out the difference.
 From the conservation of mass, momentum, and energy
\eqref{conservation}, it follows that
\[
\int_{\mathbb{T}^3}a^\varepsilon(t,x)dx=
\int_{\mathbb{T}^3}b^\varepsilon(t,x)dx=
\int_{\mathbb{T}^3}c^\varepsilon(t,x)dx=0
\]
By Poincar$\acute{e}$ inequality, it suffices to estimate
\[
\nabla_x\partial_\alpha a^\varepsilon,\;\nabla_x\partial_\alpha
b^\varepsilon,\;\nabla_x\partial_\alpha c^\varepsilon,\;
\]
for $|\alpha|\leq N$. First, we use the local conservation laws:
Multiply $\sqrt{\mu}, v\sqrt{\mu},|v|^2\sqrt{\mu}$ with \eqref{P_B}
and integrate in $v\in \mathbb{R}^3$. By the collision invariants,
we obtain
\begin{equation}\label{hydro}
\begin{split}
&\partial_ta^\varepsilon=\frac{1}{2}\langle v\cdot
\nabla_x(\mathbf{I-P})f_\eps ,|v|^2\sqrt{\mu}\rangle\\
&\partial_tc^\varepsilon+\frac{1}{3}\nabla_x\cdot
b^\varepsilon=-\frac{1}{6}\langle v\cdot
\nabla_x(\mathbf{I-P})f_\eps ,|v|^2\sqrt{\mu}\rangle\\
&\partial_tb^\varepsilon+\nabla_xa^\varepsilon+5\nabla_xc^\varepsilon=-\langle
v\cdot \nabla_x(\mathbf{I-P})f_\eps ,v\sqrt{\mu}\rangle
\end{split}
\end{equation}
The second ingredient of the proof is the macroscopic equations. By
plugging $f_\eps =\mathbf{P}f_\eps +(\mathbf{I-P}) f_\eps $ into
\eqref{P_B}, we get
\[
\begin{split}
\{\partial_ta^\varepsilon+\partial_tb^\varepsilon \cdot
v+\partial_tc^\varepsilon |v|^2\}\sqrt{\mu}+ v\cdot
\{\nabla_xa^\varepsilon +\nabla_xb^\varepsilon \cdot
v+\nabla_xc^\varepsilon|v|^2\}\sqrt{\mu}\\
=-\{\partial_t+v\cdot\nabla_x\} (\mathbf{I-P})f_\eps
-\frac{1}{\varepsilon}L(\mathbf{I-P})f_\eps + \Gamma(f_\eps ,f_\eps
)
\end{split}
\]
Fix $t,x$, and compare the coefficients on both sides in front of
\[
[\sqrt{\mu}, v_i\sqrt{\mu}, v_iv_j\sqrt{\mu},v_i|v|^2\sqrt{\mu}].
\]
Then we get the following macroscopic equations as
\begin{align}
\partial_ic^\varepsilon&=l_c^\varepsilon+h_c^\varepsilon\\
\partial_tc^\varepsilon+ \partial_i b_i^\varepsilon&=
l_{i}^\varepsilon+h_{i}^\varepsilon\\
\partial_i b_j^\varepsilon+\partial_j b_i^\varepsilon
&=l_{ij}^\varepsilon+h_{ij}^\varepsilon,\; i\neq j\\
\partial_tb^\varepsilon_i+ \partial_i a^\varepsilon&=
l_{bi}^\varepsilon+h_{bi}^\varepsilon\\
\partial_ta^\varepsilon &= l_a^\varepsilon+h_a^\varepsilon
\end{align}
Here the linear parts $l_c^\varepsilon,l_{i}^\varepsilon,
l_{ij}^\varepsilon,l_{bi}^\varepsilon,l_{a}^\varepsilon$ are of the
form
\begin{equation}
\langle -\{\partial_t+v\cdot\nabla_x\} (\mathbf{I-P})f_\eps
-\frac{1}{\varepsilon}\mathcal{L}(\mathbf{I-P})f_\eps , \zeta
\rangle
\end{equation}
where $\zeta$ is a linear combination of the basis
\[
[\sqrt{\mu}, v_i\sqrt{\mu}, v_iv_j\sqrt{\mu},v_i|v|^2\sqrt{\mu}],
\]
and accordingly,
$h_{c}^\varepsilon,h_{i}^\varepsilon,h_{ij}^\varepsilon
h_{bi}^\varepsilon,h_{a}^\varepsilon$ are  defined as $\langle
\Gamma(f_\eps ,f_\eps ),\zeta\rangle$ with the same choices of
$\zeta$.

Following the proof of Lemma 6.1 in \cite{G}, we first deduce
\begin{align*}
\|\nabla\partial_{\alpha}b^{\varepsilon}\|^{2} &\leq-\frac{d}{dt}
\int_{\mathbb{T}^{3}}\langle(\mathbf{I-P})
\partial_{\alpha
}f^{\varepsilon},\zeta_{ij}\rangle\cdot\partial_j\partial_{\alpha
}b^{\varepsilon}dx\\
&\;\;+C\|\nabla_{x}\partial_{\alpha}(\mathbf{I-P})
f^{\varepsilon}\|_{\nu}\{\|\nabla\partial_{\alpha}a^{\varepsilon}\|
+\|\nabla\partial_{\alpha}c^{\varepsilon}\|\}\\
&\;\;+C\{\|
\partial_{\alpha}(\mathbf{I-P})
f^{\varepsilon}\|_{\nu}^2+\|\nabla_{x}
\partial_{\alpha}(\mathbf{I-P})
f^{\varepsilon}\|_{\nu}^2\}
\\
&\;\;+\frac{C}{\varepsilon}\{\|\nabla_{x}\partial_{\alpha}(\mathbf{I-P})
f^{\varepsilon}\|_{\nu}+\|\partial_{\alpha}(\mathbf{I-P})f^{\varepsilon
}\|_{\nu}\}\|\nabla\partial_{\alpha}b^{\varepsilon}\|
+C\|\partial_{\alpha
}h_{\|}^{\varepsilon}\|\cdot\|\nabla\partial_{\alpha}b^{\varepsilon}\|.
\end{align*}
Note that the difference from the estimate in \cite{G} so far is
that the scaling parameter
 $\varepsilon$ is absent in the $t$-derivative term due to the
acoustic scaling. Now multiply it by $\varepsilon$ and apply the
Cauchy-Schwarz inequality to get
\begin{align*}
\varepsilon\|\nabla\partial_{\alpha}b^{\varepsilon}\|^{2} &
\leq-\varepsilon \frac
{d}{dt}\int_{\mathbb{T}^{3}}\langle(\mathbf{I-P})\partial_{\alpha
}f^{\varepsilon},\zeta_{ij}\rangle\cdot\partial_{j}\partial_{\alpha
}b^{\varepsilon}dx
+\frac{\varepsilon^2}{2}\{\|\nabla\partial_{\alpha}a^{\varepsilon}\|^{2}
 +\|\nabla\partial_{\alpha}c^{\varepsilon}\|^{2}\}\\
&\;\;+\frac{C}{\varepsilon}\{\|\nabla_{x}\partial_{\alpha}(\mathbf{I-P})%
f^{\varepsilon}\|_{\nu}^{2}+\|\partial_{\alpha}(\mathbf{I-P})f^{\varepsilon
}\|_{\nu}^{2}\} +C\varepsilon\|\partial_{\alpha
}h_{\|}^{\varepsilon}\|^2+\frac{\varepsilon}{2}
\|\nabla\partial_{\alpha}b^{\varepsilon}\|^{2}.
\end{align*}
By the same token, we obtain the similar estimates on
$\nabla\partial_\alpha c^\varepsilon$ and $\nabla\partial_\alpha
a^\varepsilon$ as follows:
\begin{align*}
\varepsilon\|\nabla\partial_{\alpha}c^{\varepsilon}\|^{2}  &  \leq-
\varepsilon\frac{d}{dt}%
\int_{\mathbb{T}^{3}}\langle(\mathbf{I-P})\partial_{\alpha
}f^{\varepsilon},\zeta_{c}\rangle\cdot\nabla_{x}\partial_{\alpha
}c^{\varepsilon}dx+\frac{\varepsilon^2}{2}
\|\nabla\partial_{\alpha}b^{\varepsilon}\|^{2}\\
&\;\;+\frac{C}{\varepsilon}\{\|\nabla_{x}\partial_{\alpha}(\mathbf{I-P})%
f^{\varepsilon}\|_{\nu}^{2}+\|\partial_{\alpha}(\mathbf{I-P})f^{\varepsilon
}\|_{\nu}^{2}\} +C\varepsilon\|\partial_{\alpha
}h_{\|}^{\varepsilon}\|^2+\frac{\varepsilon}{2}
\|\nabla\partial_{\alpha}c^{\varepsilon}\|^{2},\\
\varepsilon\|\nabla\partial_{\alpha}a^{\varepsilon}\|^{2}  & \leq
-\varepsilon\frac{d}{dt}\{\int_{\mathbb{T}^{3}}\langle(\mathbf{I-P})%
\partial_{\alpha}f^{\varepsilon},\zeta\rangle\cdot\nabla_{x}\partial_{\alpha
}a^{\varepsilon}dx+\int_{\mathbb{T}^{3}}\partial_{\alpha
}b^{\varepsilon}\cdot\nabla_{x}\partial_{\alpha}a^{\varepsilon}dx\}+
\frac{\varepsilon^2}{2}
\|\nabla\partial_{\alpha}b^{\varepsilon}\|^{2}\\
&\;\;+\frac{C}{\varepsilon}\{\|\nabla_{x}\partial_{\alpha}(\mathbf{I-P})%
f^{\varepsilon}\|_{\nu}^{2}+\|\partial_{\alpha}(\mathbf{I-P})f^{\varepsilon
}\|_{\nu}^{2}\} +C\varepsilon\|\partial_{\alpha
}h_{\|}^{\varepsilon}\|^2+\frac{\varepsilon}{2}
\|\nabla\partial_{\alpha}a^{\varepsilon}\|^{2}.
\end{align*}
By absorbing the hydrodynamic terms in the right hand sides into the
left hand sides, we obtain the desired estimates \eqref{PE}.
\end{proof}

Next we perform the energy estimates of spatial derivatives.

\begin{lem}\label{energy} Assume that $\fe$ is a solution to
equation \eqref{P_B} and satisfies \eqref{conservation}; then there
exists a constant $C_1\geq 1$ such that the following energy
estimate is valid:
\begin{equation}\label{energy-est}
\begin{split}
\frac{d}{dt}\{C_1&\sum_{|\alpha|\leq N+1}\|\partial_\alpha f_\eps
\|^2- \varepsilon\delta G(t)\}+\delta \sum_{|\alpha|\leq
N+1}\{\varepsilon\|
\partial_{\alpha}\mathbf{P}f_\eps \|^{2}
+\frac{1}{\varepsilon} \|\partial_{\alpha}(\mathbf{I-P})f_\eps
\|_{\nu}^{2}\}\\
&\leq 2C_1\sum_{|\alpha|\leq N+1}(\partial_\alpha\Gamma(f_\eps ,
f_\eps ),\partial_\alpha f_\eps ) +\varepsilon
\delta\sum_{|\alpha|\leq N}\|\partial_\alpha\Gamma(f_\eps ,
f_\eps )_{\|}\|^2\\
&\leq C\{\mathcal{E}_{N,0}^{1/2}(f_\eps ) +\mathcal{E}_{N,0}(f_\eps
)\}\mathcal{D}_{N,0}(f_\eps )
\end{split}
\end{equation}
\end{lem}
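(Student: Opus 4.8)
The plan is to run the standard nonlinear energy estimate on the pure spatial derivatives and then combine it with the macroscopic bound \eqref{PE} of Lemma \ref{positivity} to upgrade the microscopic dissipation into a full dissipation rate $\mathcal{D}_{N,0}$. First I would apply $\partial_\alpha$ for each $|\alpha|\le N+1$ to \eqref{P_B} and take the $L^2(\mathbb{T}^3\times\mathbb{R}^3)$ inner product with $\partial_\alpha f_\eps$. Since $\alpha$ contains only $x$-derivatives, $\partial_\alpha$ commutes with both $\mathcal{L}$ and $\mathbf{P}$; the streaming term drops because $(v\cdot\grad\partial_\alpha f_\eps,\partial_\alpha f_\eps)=\tfrac12\int_{\mathbb{T}^3\times\mathbb{R}^3} v\cdot\grad|\partial_\alpha f_\eps|^2=0$ by periodicity. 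The coercivity \eqref{coer}, integrated in $x$, then yields $\tfrac1\eps(\partial_\alpha\mathcal{L}f_\eps,\partial_\alpha f_\eps)\ge\tfrac{\delta}{\eps}\|\partial_\alpha(\mathbf{I-P})f_\eps\|_\nu^2$, so that after summing over $|\alpha|\le N+1$ and multiplying by $2C_1$ I obtain a differential inequality bounding $C_1\tfrac{d}{dt}\sum\|\partial_\alpha f_\eps\|^2+\tfrac{2C_1\delta}{\eps}\sum\|\partial_\alpha(\mathbf{I-P})f_\eps\|_\nu^2$ by $2C_1\sum(\partial_\alpha\Gamma(f_\eps,f_\eps),\partial_\alpha f_\eps)$.

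At this stage only the microscopic dissipation is visible, which is the generic feature of the coercivity estimate. To produce the $\eps\|\partial_\alpha\mathbf{P}f_\eps\|^2$ term demanded by $\mathcal{D}_{N,0}$, I would add $\delta$ times the macroscopic inequality \eqref{PE}. This introduces the multiplier $-\eps\delta G(t)$ inside the time derivative and the desired $\delta\eps\sum\|\partial_\alpha\mathbf{P}f_\eps\|^2$, at the price of borrowing back $\tfrac{\delta C_1}{\eps}\sum\|\partial_\alpha(\mathbf{I-P})f_\eps\|_\nu^2$. Choosing $C_1\ge1$ ensures the surviving coefficient $\tfrac{(2C_1-C_1)\delta}{\eps}=\tfrac{C_1\delta}{\eps}\ge\tfrac{\delta}{\eps}$ of the microscopic dissipation is still at least $\delta/\eps$, which gives exactly the left-hand side of \eqref{energy-est} and leaves on the right the two nonlinear contributions $2C_1\sum_{|\alpha|\le N+1}(\partial_\alpha\Gamma,\partial_\alpha f_\eps)$ and $\eps\delta\sum_{|\alpha|\le N}\|\partial_\alpha\Gamma(f_\eps,f_\eps)_{\|}\|^2$ (the fixed constant $C_1$ in front of the latter being harmless in the final bound).

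The main obstacle is the last inequality: bounding the nonlinear terms by $C\{\mathcal{E}_{N,0}^{1/2}+\mathcal{E}_{N,0}\}\mathcal{D}_{N,0}$ \emph{uniformly in} $\eps$. The subtlety is the $\eps$-weighting in $\mathcal{D}_{N,0}$, where the macroscopic part carries a factor $\eps$ and the microscopic part a factor $\tfrac1\eps$; a naive estimate of $(\partial_\alpha\Gamma,\partial_\alpha f_\eps)$ would leave an uncontrolled $\eps^{-1}$. The point that rescues the scaling is the collision-invariance identity $\mathbf{P}\Gamma(\cdot,\cdot)=0$, i.e. $\langle\Gamma(g,h),[1,v,|v|^2]\sqrt{\mu}\rangle=0$, which, since $\partial_\alpha$ commutes with $\mathbf{P}$, lets me replace $\partial_\alpha f_\eps$ by $\partial_\alpha(\mathbf{I-P})f_\eps$ in the pairing. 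Applying \eqref{nonlinear2} then gives $(\partial_\alpha\Gamma(f_\eps,f_\eps),\partial_\alpha(\mathbf{I-P})f_\eps)\lesssim\mathcal{E}_{N,0}^{1/2}\|\partial_{\alpha_2}f_\eps\|_\nu\|\partial_{\alpha_3}(\mathbf{I-P})f_\eps\|_\nu$; since $\|\partial_{\alpha_2}f_\eps\|_\nu\lesssim\eps^{-1/2}\mathcal{D}_{N,0}^{1/2}$ (the macroscopic part dominating) while $\|\partial_{\alpha_3}(\mathbf{I-P})f_\eps\|_\nu\lesssim\eps^{1/2}\mathcal{D}_{N,0}^{1/2}$, the two powers of $\eps$ cancel and this term is $\lesssim\mathcal{E}_{N,0}^{1/2}\mathcal{D}_{N,0}$.

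For the second nonlinear term I would use the projected estimate in \eqref{nonlinear2}, which gives $\|\partial_\alpha\Gamma(f_\eps,f_\eps)_{\|}\|\lesssim\mathcal{E}_{N,0}^{1/2}\|\partial_{\alpha_2}f_\eps\|_\nu$, whence $\eps\delta\sum\|\partial_\alpha\Gamma_\|\|^2\lesssim\eps\,\mathcal{E}_{N,0}\cdot\eps^{-1}\mathcal{D}_{N,0}=\mathcal{E}_{N,0}\mathcal{D}_{N,0}$. Adding the two estimates produces the claimed $C\{\mathcal{E}_{N,0}^{1/2}+\mathcal{E}_{N,0}\}\mathcal{D}_{N,0}$. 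Throughout one uses $0<\eps\le\tfrac14$ and the Sobolev embedding built into $\mathcal{E}_{N,0}^{1/2}$, valid since $N\ge8$, to place the low-order factor in $L^\infty_x$ while distributing the remaining derivatives into the dissipation norm; this same derivative bookkeeping handles the top order $|\alpha|=N+1$, where the low factor still carries at most $[\tfrac{N+1}{2}]+2\le N$ derivatives.
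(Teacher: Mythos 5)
Your proposal is correct and follows essentially the same route as the paper's own proof: the spatial energy identity with the coercivity \eqref{coer}, the addition of a small multiple of the macroscopic bound \eqref{PE} to generate the $\varepsilon\|\partial_\alpha\mathbf{P}f_\eps\|^2$ dissipation at the cost of part of the microscopic dissipation (with $C_1\geq 1$ keeping the remainder positive), and the nonlinear bounds via $\mathbf{P}\Gamma=0$ together with \eqref{nonlinear2}. In particular, your replacement of $\partial_\alpha f_\eps$ by $\partial_\alpha(\mathbf{I-P})f_\eps$ in the pairing is exactly the device the paper uses to cancel the $\varepsilon^{\pm 1/2}$ weights in $\mathcal{D}_{N,0}$, so there is no gap.
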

\begin{proof}
We take $\p_\alpha$ of \eqref{P_B} and sum over $\alpha$ to get
\begin{align*}
\frac{1}{2}\frac{d}{dt}\|\partial_\alpha f_\eps \|^2+
\frac{\delta}{\varepsilon}\|(\mathbf{I-P})\partial_\alpha f_\eps
\|_\nu^2 \leq (\partial_\alpha\Gamma(f_\eps , f_\eps
),\partial_\alpha f_\eps )\,.
\end{align*}
By Lemma \ref{positivity}, there is a constant $C_1\geq 1$ such that
\begin{equation}
\begin{split}
\frac{\delta}{2\varepsilon}\sum_{|\alpha|\leq N+1}&\|\p_\alpha\ipf\|^2_\nu\\
&\geq \frac{\delta\varepsilon}{2C_1}\sum_{|\alpha|\leq
N+1}\|\p_\alpha\pf\|^2-\frac{\delta\varepsilon}{2C_1}\frac{dG}{dt}-\frac{\delta\varepsilon}{2}
\sum_{|\alpha|\leq N}\|\p_\alpha\Gamma(\fe,\fe)_{\|}\|^2\,.
\end{split}
\end{equation}
Multiply by $C_1$ and collecting terms, we deduce the first
inequality in \eqref{energy-est}. By the nonlinear estimate in
\eqref{nonlinear2}, it is easy to derive that for $|\alpha|\leq N$
\begin{equation}\label{proj-est}
\|\p_\alpha\Gamma(f_\eps \,,f_\eps )_{\|}\|^2\leq
C\mathcal{E}_{N,0}(f_\eps )\mathcal{D}_{N,0}(f_\eps )\,,
\end{equation}
and
\begin{equation}
\begin{split}
(\p_\alpha\Gamma(f_\eps \,,f_\eps )\,,\p_\alpha f_\eps )&\leq C\E^{1/2}_{N,0}(f_\eps )\|\eps^{1/2}\pa f_\eps \|_\nu\|\eps^{-1/2}\pa\ipf\|_\nu\\
&\leq C\E^{1/2}_{N,0}(f_\eps )\D_{N,0}(f_\eps )\,.
\end{split}
\end{equation}
Thus, the second inequality in \eqref{nonlinear2} follows and this
finishes the proof of the lemma.
 \end{proof}

\section{The first order remainder}

In this section we finish the proof of Theorem \ref{Acoustic}. We
already established a pure spatial energy estimate for all collision
kernels in Lemma \ref{energy}. For general derivatives $\pab$,
different collision kernels require different weight functions, we
treat separately in two cases: hard potentials then soft potentials
and Landau kernel.

\subsection{Proof of hard potential case of Theorem \ref{Acoustic}}

\begin{proof} First note that
for the hydrodynamic part $\mathbf{P}f_\eps $,
\[
\|\partial_\alpha^\beta\mathbf{P}f_\eps \|\leq
C\|\partial_\alpha\mathbf{P}f_\eps \|
\]
which has been estimated in  Lemma \ref{energy}. In order to prove
Theorem \ref{Acoustic}, it remains to estimate the remaining
microscopic part $\partial_\alpha^\beta(\mathbf{I-P})f_\eps $ for
$|\alpha|+ |\beta|\leq N$. We take $\pab$ of equation \eqref{P_B}
and sum over $|\alpha|+ |\beta|\leq N$ to get
\begin{equation}\label{IP-equation}
\begin{split}
\p_t\pab&\ipf+v\cdot\nabla_x\pab\ipf+\frac{1}{\eps}\pab \mathcal{L}\ipf\\
&+\left(\p_t\pab\pf+\vgrad\pab\pf+\binom{\beta_1}{\beta}\p_{\beta_1}\vgrad\p^{\beta-\beta_1}_{\alpha}f_\eps \right)\\
&=\pab\Gamma(f_\eps \,,f_\eps )\,,
\end{split}
\end{equation}
where $|\beta_1|=1$. Taking the inner product with $w^{2l}\pab\ipf$,
we get
\begin{equation}\label{energy-id}
\begin{split}
\frac{d}{dt}&\left\{\frac{1}{2}\|\wl\pab\ipf\|^2\right\}+\frac{1}{\var}(w^{2l}\pab\mathcal{L}\ipf\,,\pab\ipf)\\
&+\left(\p_t\pab\pf+\vgrad\pab\pf+\binom{\beta_1}{\beta}\p^{\beta_1}_\alpha\vgrad\p^{\beta-\beta_1}_{\alpha}f_\eps
\,,w^{2l}\pab\ipf
\right)\\
&\leq\left(w^{2l}\pab\Gamma(f_\eps \,,f_\eps )\,,\pab\ipf\right)\,.
\end{split}
\end{equation}
By the linear estimate \eqref{linear}, we have
\begin{equation}
\frac{1}{\var}(w^{2l}\pab\mathcal{L}\ipf\,,\pab\ipf)\geq
\frac{1}{2\eps}\|\wl\pab\ipf\|^2_\nu-\frac{C}{\eps}\|\p_\alpha\ipf\|^2_\nu\,.
\end{equation}
From the local conservation laws \eqref{hydro} and the estimate
\eqref{proj-est},
\begin{equation}
\begin{split}
\|w^{2l}\p_t\pab\pf\|&\leq C\sum_{\aN}(\|\p_t\pa a^\eps\|+\|\p_t\pa b^\eps\|+\|\p_t\pa c^\eps\|)\\
&\leq C\left(\sum_{|\alpha|\leq N+1}\|\pa f_\eps \|_\nu+\sum_{\aN}\|\pa h^\eps_{\|}\|\right)\\
&\leq C\left(\sum_{|\alpha|\leq N+1}\|\pa f_\eps
\|_\nu+\E^{1/2}_{N,0}(f_\eps )\D^{1/2}_{N,0}(f_\eps )\right)\,.
\end{split}
\end{equation}
We also have
\begin{equation}
\|w^{2l}\vgrad\pab\pf\|\leq C\sum_{\aN}\|\nabla_x\pa\pf\|\leq
C\sum_{|\alpha|\leq N+1}\|\pa\pf\|\,.
\end{equation}
Thus the first two inner products in the second line of
\eqref{energy-id} is bounded by
\begin{equation}
\frac{1}{8\eps}\sum_{\abN}\|\pab\ipf\|^2_\nu+C\left(\sum_{|\alpha|\leq
N+1}\|\pa f_\eps \|^2_\nu+\E^{1/2}_{N,0}(f_\eps )\D_{N,0}(f_\eps
)\right)\,.
\end{equation}
The last term in the second line of \eqref{energy-id} is bounded by
\begin{equation}
\begin{split}
C&|(\p^{\beta_1}\vgrad\p^{\beta-\beta_1}_\alpha\ipf\,,w^{2l}\pab\ipf)|\\&+C|(\p^{\beta_1}\vgrad\p^{\beta-\beta_1}_\alpha\pf\,,
w^{2l}\pab\ipf)|\\
&\leq C\|\nabla_x\p^{\beta-\beta_1}_\alpha\ipf\|^2_\nu+\frac{1}{8\eps}\|w^l\pab\ipf\|^2_\nu+C \eps\|\pa\pf\|^2\\
&\leq C \eps\D_{N,l}(f_\eps )+\frac{1}{8\eps}\|w^l\pab\ipf\|^2_\nu+C
\eps\|\pa\pf\|^2\,,
\end{split}
\end{equation}
since $\nu(v)$ is bounded from below for hard potential.

Now we estimate the nonlinear term in \eqref{energy-id}. By the
nonlinear estimate in \eqref{nonlinear},
\begin{equation}
\begin{split}
(w^{2l}\pab\Gamma(f_\eps ,f_\eps )\,,\pab\ipf)&\leq C\E^{1/2}_{N,l}(f_\eps )\|\eps^{1/2}\wl\pa f_\eps \|_\nu\|\eps^{-1/2}\wl\pa\ipf\|_\nu\\
&\leq C\E^{1/2}_{N,l}(f_\eps )\D_{N,l}(f_\eps )\,.
\end{split}
\end{equation}
Using the coercivity of $\mathcal{L}$ \eqref{coer} and absorbing a
total of $\frac{1}{\eps}\|w^l\pab\ipf\|^2_\nu$ from the right-hand
side, we have
\begin{equation}\label{IP-est}
\begin{split}
\sum_{\abN}&\left(\frac{d}{dt}\left\{\frac{1}{2}\|\wl\pab\ipf\|^2\right\}+\frac{1}{4\eps}\|w^l\pab\ipf\|^2_\nu\right)\\
&\leq C\sum_{aaN}\|\pa f_\eps \|^2_\nu+C\left(\E^{1/2}_{N,l}(f_\eps
)+\eps\right)\D_{N,l}(f_\eps )\,.
\end{split}
\end{equation}

Multiplying \eqref{IP-est} by a factor and adding a large multiple
$K$ of \eqref{energy-est}, we have
\begin{equation}
\begin{split}
\frac{d}{dt}&(K\{C_1\sum_{\aaN}\|\pa f_\eps \|^2-\eps\delta
G(t)\}+2\sum_{\abN}\|w^l\pab\ipf\|^2)
+\D_{N,l}(f_\eps )\\
&\leq C_K\left(\E^{1/2}_{N,l}(f_\eps )+\E_{N,l}(f_\eps
)+\eps\right)\D_{N,l}(f_\eps )\,.
\end{split}
\end{equation}
Notice that
\begin{equation}
\|w^l\pab\pf\|^2\leq C\|\pa\pf\|^2\leq C\|\pa f_\eps \|^2\,,
\end{equation}
and
\begin{equation}
G(t)\leq C\sum_{aaN}\|\pa\pf\|(\|\ip\pa f_\eps \|+\|\pa \pf\|)\,.
\end{equation}
Thus we can redefine the instant energy by
\begin{equation}
\E_{N,l}(f_\eps )=K\{C_1\sum_{\aaN}\|\pa f_\eps \|^2-\eps\delta
G(t)\}+2\sum_{\abN}\|w^l\pab\ipf\|^2
\end{equation}
for $\eps$ sufficiently small. By a standard continuity argument, we
deduce our main estimate \eqref{en} by letting $\E_{N,l}(f_\eps )$
be sufficiently small initially.
\end{proof}

\subsection{Proof of soft potential and Landau cases for Theorem \ref{Acoustic}}

We follow the same idea as in the hard potential case to establish
\eqref{en} for both soft potential and Landau kernels. First, for
soft potential cases, we take inner product of
$w^{2(l-|\beta|)|\gamma|}\pab(\mathbf{I-P})f_\eps$ with the equation
\eqref{IP-equation} and sum over $|\alpha|+|\beta| \leq N$ to get
\begin{equation}\label{energy-id-soft}
\begin{split}
\frac{d}{dt}&\left\{\frac{1}{2}\|\wlbr\pab\ipf\|^2\right\}+\frac{1}{\var}(\wT\pab\mathcal{L}\ipf\,,\pab\ipf)\\
&+\left(\p_t\pab\pf+\vgrad\pab\pf+\binom{\beta_1}{\beta}\p^{\beta_1}_\alpha\vgrad\p^{\beta-\beta_1}_{\alpha}f_\eps
\,,\wT\pab\ipf
\right)\\
&\leq\left(\wT\pab\Gamma(f_\eps \,,f_\eps )\,,\pab\ipf\right)\,,
\end{split}
\end{equation}
for $|\beta_1|=1$. By the linear estimate \eqref{linear-soft}, we
have
\begin{equation}
\begin{split}
&\frac{1}{\var}(w^{2\{l-|\beta|\}|\gamma|}\pab\mathcal{L}\ipf\,,\pab\ipf)\\
&\geq
\frac{1}{2\eps}\|w^{\{l-|\beta|\}|\gamma|}\pab\ipf\|^2_\nu-\frac{C}{\eps}\|\p_\alpha\ipf\|^2_\nu\,.
\end{split}
\end{equation}
From the local conservation laws \eqref{hydro}, we have
\begin{equation}
\|w^{2\{l-|\beta|\}|\gamma|}\p_t\pab\pf\| \leq
C\left(\sum_{|\alpha|\leq N+1}\|\pa f_\eps
\|_\nu+\E^{1/2}_{N,0}(f_\eps )\D^{1/2}_{N,0}(f_\eps )\right)\,.
\end{equation}
We also have
\begin{equation}
\|w^{2\{l-|\beta|\}|\gamma|}\vgrad\pab\pf\|\leq
C\sum_{\aN}\|\nabla_x\pa\pf\|\leq C\sum_{|\alpha|\leq
N+1}\|\pa\pf\|\,.
\end{equation}
Note that $\|\cdot\|_\nu$ is equivalent to $\|w^{\gamma/2}\cdot\|$,
the first two inner products in the second line of
\eqref{energy-id-soft} is bounded by
\begin{equation}
\frac{1}{8\eps}\sum_{\abN}\|\wlbr\pab\ipf\|^2_\nu+C\left(\sum_{|\alpha|\leq
N+1}\|\pa f_\eps \|^2_\nu+\E^{1/2}_{N,0}(f_\eps )\D_{N,0}(f_\eps
)\right)\,.
\end{equation}
The weight function $w^{|\beta|\gamma}$ is so designed to treat the
last term in the second line of \eqref{energy-id-soft}
\begin{equation}
\begin{split}
C&|(\p^{\beta_1}\vgrad\p^{\beta-\beta_1}_\alpha\ipf\,,\wT\pab\ipf)|\\&+C|(\p^{\beta_1}\vgrad\p^{\beta-\beta_1}_\alpha\pf\,,
\wT\pab\ipf)|\\
&\leq C\|w^{l+|\beta-\beta_1|\gamma}\nabla_x\p^{\beta-\beta_1}_\alpha\ipf\|^2_\nu+\frac{1}{8\eps}\|\wlbr\pab\ipf\|^2_\nu+C \eps\|\pa\pf\|^2\\
&\leq C \eps\D_{N,l}(f_\eps
)+\frac{1}{8\eps}\|\wlbr\pab\ipf\|^2_\nu+C \eps\|\pa\pf\|^2\,.
\end{split}
\end{equation}
The nonlinear term in \eqref{energy-id-soft} is estimated by
\eqref{nonlinear-soft},
\begin{equation}
\begin{split}
&(\wT\pab\Gamma(f_\eps ,f_\eps )\,,\pab\ipf)\\
&\leq C\E^{1/2}_{N,l}(f_\eps )\|\eps^{1/2}\wlbr\pa f_\eps \|_\nu\|\eps^{-1/2}\wlbr\pa\ipf\|_\nu\\
&\leq C\E^{1/2}_{N,l}(f_\eps )\D_{N,l}(f_\eps )\,.
\end{split}
\end{equation}
The rest of the proof is similar to the hard potential case, the
nonlinear estimate \eqref{en} can be deduced by letting
\begin{equation}
\begin{split}
\E_{N,l}(f_\eps )&=K\{C_1\sum_{\aaN}\|\pa f_\eps \|^2-\eps\delta G(t)\}\\
&+2\sum_{\abN}\|\wlbr\pab\ipf\|^2\,.
\end{split}
\end{equation}
To establish the estimate \eqref{en} for the Landau case for which
the power of weight is $\gamma=-1$. We follow the same procedure as
in the soft potential case. Take the inner product with
$w^{2l-2|\beta|}\pab\ipf$ for equation \eqref{IP-equation} to get
\eqref{energy-id-soft} with $\gamma=-1$. All the estimates for the
soft potential case can applied for the case $\gamma=-1$. So we omit
the details here.\\

\indent{\bf Acknowledgements.} This project was initiated by the
suggestion of Prof. Yan Guo when N. Jiang visited The Applied
Mathematics Division of Brown University in May 2007. The main part
of this work was carried out when J. Jang was a member of IAS
2007-2008 academic year. The authors would like to appreciate the
hospitality of these institutes. They also wish to thank Y. Guo for
many discussions and his long-term support.

\end{document}